\pgfplotsset{compat=1.18}
\crefname{hypothesis}{Hypothesis}{Hypotheses}
\crefname{fact}{Fact}{Facts}
\title{A parallel-in-time Newton's method-based ODE solver\thanks{Submitted to the editors.
\funding{
This work was funded by the Research Council of Finland and the Finnish Doctoral Program Network in Artificial Intelligence (AI-DOC).
}}}
\author{Casian Iacob\thanks{Department of Electrical Engineering and Automation, Aalto University, 02150 Espoo, Finland (\email{casian.iacob@aalto.fi}, \email{hassan.razavi@aalto.fi}, \email{simo.sarkka@aalto.fi})}
\and Hassan Razavi\footnotemark[2]
\and Simo S\"arkk\"a\footnotemark[2]}
\begin{document}

\maketitle

\begin{abstract}
In this article, we introduce a novel parallel-in-time solver for nonlinear ordinary differential equations (ODEs). We state the numerical solution of an ODE as a root-finding problem that we solve using Newton's method. The affine recursive operations arising in Newton's step are parallelized in time by using parallel prefix sums, that is, parallel scan operations, which leads to a logarithmic span complexity. This yields an improved runtime compared to the previously proposed Parareal method. We demonstrate the computational advantage through numerical simulations of various systems of ODEs.
\end{abstract}

\begin{keywords}
ordinary differential equation, parallel scan, prefix sum, Newton's method, initial value problem, GPU
\end{keywords}

\begin{MSCcodes}
34A34, 65L05, 68W10, 65Y05
\end{MSCcodes}

\section{Introduction}
Numerous scientific and engineering applications involve the numerical solution of ordinary differential equations (ODEs). In many domains, solution methods must be fast due to real-time operation constraints or the computational demands of numerical solutions. This is the case, for example, in optimal control \cite{lewis2012optimal}, filtering and smoothing \cite{sarkka2023bayesian}, reinforcement learning \cite{sutton1998reinforcement}, and weather prediction \cite{lorenz2017deterministic}. One way to improve the speed of the solutions is to use parallelization, which has led to the development of various parallel-in-time solvers for ODEs \cite{gander201550}. 

There exist numerous methods to solve ODEs (see, e.g., \cite{butcher2016numerical, hairer1993solving, hairer1996solving}), and most of them rely on a sequential, iterative process involving a nonlinear function. Strategies of temporal parallelization have been developed to improve efficiency, which include the Parareal method \cite{lions2001resolution}, multigrid methods \cite{falgout2014parallel, dobrev2017two}, parallel spectral methods \cite{haut2014asymptotic, boyd2001chebyshev}, and exponential integrators \cite{gander2013paraexp}. A survey on parallel-in-time methods is available in \cite{gander201550}. Among these, the most popular approach is the Parareal algorithm \cite{lions2001resolution}. It has been heavily studied \cite{gander2007analysis, jin2025optimizing, gattiglio2024randnet, gattiglio2025prob, gander2025parareal} and extended to fit different problems, including optimal control \cite{gander2020paraopt, mathew2010analysis, maday2013parareal, fang2022parallel}, data assimilation \cite{bhatt2025introducing}, and stochastic differential equations \cite{legoll2020parareal, pentland2023error}. In Parareal \cite{lions2001resolution}, a cheap but inaccurate integrator is used to generate a rough solution which is then iteratively improved by using a more expensive but accurate integrator. A single-level implementation of Parareal can achieve square-root complexity. For multilevel Parareal \cite{rosemeier2024multilevel}, the computational span is given by the total critical path runtime. The coarse integrator steps are done sequentially, while the fine integrator only adds complexity through the required Parareal iterations.

In this paper, we leverage an emerging strategy for realizing temporal parallelization, which involves redefining recursive operations as generalized prefix-sum operations and using parallel associative scan \cite{blelloch1990prefix, blelloch1989scans} for computing them. A parallel associative scan is an algorithm that can be used to compute prefix sums for associative operations and associative elements in logarithmic time \cite{blelloch1989scans}. Implementations of this strategy already exist in the field of optimal control \cite{sarkka2024temporal, sarkka2022temporal, iacob2025parallel}, filtering and smoothing \cite{yaghoobi2025parallel, yaghoobi2024parallel}, dynamics computation in robotics \cite{yang2017parallel}, sorting algorithms \cite{satish2009designing}, and probabilistic ordinary and partial differential equation solvers \cite{bosch2024parallel, iqbal2024parallel}.  

Unfortunately, parallel associative scans are not directly applicable to parallelizing nonlinear ODE solvers, because they consist of arbitrary function compositions that, in general, do not correspond to associative operations on finite-dimensional elements. However, parallel associative scans are directly applicable to parallelizing affine ODE solutions \cite{blelloch1990prefix}, which is the property that we use in this paper. For this purpose, we define a sequence of affine problems that converge to the nonlinear solution of an ODE and thus can be used to realize the temporal parallelization of nonlinear ODE solvers. We achieve this by performing an explicit rollout of nonlinear equations obtained from applying a sequential ODE solver. This leads to a root-finding problem that we solve with Newton's method \cite{nocedal2006numerical}. We observe that the entries of the Newton step follow an affine recursive structure, which is parallelizable following the results from \cite{sarkka2022temporal}. It is worth noting that a bit similar ideas for parallelizing sequential operations using Newton's method with parallel cyclic reduction methods have previously appeared in machine learning literature \cite{gonzalez2024towards, gonzalez2026predictability, zoltowski2026parallelizing, lim2024parallelizing, danieli2023deeppcr}.

The remainder of this paper is structured as follows. In Section \ref{sec:background}, we introduce the problem and discuss standard and parallel numerical methods for solving it. Next, in Section \ref{sec:method}, we present the temporal parallelization of prefix-sum operations with focus on the affine difference equation. Next, we derive a parallel-in-time Newton step for realizing both the explicit and implicit integration of nonlinear differential equations. Theoretical results ensure the convergence of the derived Newton step. Finally, in Section \ref{sec:experiments}, we test and compare the average runtime needed for the integration of various ODEs against Parareal and standard sequential solvers. Moreover, we perform a numerical analysis on the convergence rate of our method.

\section{Background}\label{sec:background}
In this section, we formulate the initial value problem and provide a discussion on its solution. Furthermore, we describe the concept of temporal parallelization.

\subsection{Problem formulation}\label{sec:problem-formulation}
Consider the ordinary differential equation \begin{equation}
    \frac{d x}{dt} =f(x), \quad t\in[t_0, t_f], \quad x(t_0) = \Bar{x},\label{eq:ivp}
\end{equation}
where $x: [t_0, t_f] \rightarrow \mathbb{R}^{d_x}$ is the ODE solution,  $f:\mathbb{R}^{d_x} \rightarrow \mathbb{R}^{d_x}$ is a locally Lipschitz continuous function \cite{hairer1993solving}, and $\Bar{x}$ is the initial value at time $t_0$. Depending on the ODE function $f$, a solution to \eqref{eq:ivp} may be obtained analytically; however, this strategy is complex and only applies to limited special cases. Instead, a more practical approach is to approximate $x$ numerically \cite{butcher2016numerical, hairer1993solving}.

The numerical approximation of the ODE solution is typically formed \cite{butcher2016numerical, hairer1993solving} by first performing a temporal discretization of the integration interval $t_0, t_1, \hdots, t_N$ such that $\delta t = t_{n+1}-t_n$, $n=0, \hdots, N-1$. Then, at each discrete time step $t_n$, an approximate solution is computed by solving an explicit or implicit nonlinear equation. Examples of explicit methods are the explicit Euler's method
\begin{equation}
    x_{n+1} = x_n +f(x_n)\delta t, \quad n=0, \hdots, N-1, \label{eq:explicit-euler}
\end{equation}
and the family of explicit Runge--Kutta methods 
\begin{equation}
    \begin{split}
        x_{n+1} &= x_n +  \delta t \sum_{i=1}^{s}b_i \kappa_i, \quad n=0,\hdots, N-1,\\
        \kappa_i &= f\left(x_n + \delta t \sum_{j=1}^{i-1} a_{ij}\kappa_j\right), \quad i=1, \hdots, s,
    \end{split}\label{eq:explicit-rk}
\end{equation}
where the coefficients $a_{ij}$ and $b_i$ are selected from the corresponding Butcher tableau \cite{butcher2016numerical}, and $s$ is the number of stages.  We can generalize \eqref{eq:explicit-euler} and \eqref{eq:explicit-rk} to a nonlinear recursion of the form
\begin{equation}
    x_{n+1} = x_n + g_n(x_n, \delta t).
    \label{eq:explicit-ode-bg}
\end{equation}
Alternatively, one can also define an implicit numerical approximation. Examples of implicit methods are the implicit Euler's method 
\begin{equation}
    x_{n+1} = x_n + f(x_{n+1}) \delta t, \quad n=0,\hdots,N-1,\label{eq:implicit-euler}
\end{equation}
and the family of implicit Runge--Kutta methods
\begin{equation}
    \begin{split}
       x_{n+1} &=x_n +  \delta t \sum_{i=1}^{s}b_i \kappa_i, \quad n=0,\hdots,N-1,\\
        \kappa_i &= f\left(x_n + \delta t \sum_{j=1}^{s} a_{ij}\kappa_j\right), \quad i=1, \hdots, s,
    \end{split}\label{eq:implicit-rk}
\end{equation}
with parameters $a_{ij}$ and $b_i$ corresponding to a suitable Butcher tableau \cite{butcher2016numerical}. As in the explicit case, $s$ denotes the number of stages. Both the implicit Euler and implicit Runge--Kutta method, as well as many other implicit methods, correspond to an iteration of the form 
\begin{equation}
    x_{n+1} = x_n + g_n(x_n, x_{n+1}, \delta t) \label{eq:implicit-ode-bg}.
\end{equation} 
Eventually, the discrete sequence of ODE states $x_0, x_1, \hdots, x_N$, where $x_0=x(t_0)=\Bar{x}$, represents the approximate solution to \eqref{eq:ivp}. It is worth noting that although the implicit approach is essential for addressing stiff problems, it demands greater computational effort because a nonlinear root-finding problem must be solved at each time step.

\subsection{Parallel-in-time solution}
The numerical approximation methods \eqref{eq:explicit-ode-bg} and \eqref{eq:implicit-ode-bg} from Section \ref{sec:problem-formulation} have a recursive structure. The direct approach of computing each ODE state as a function of the previous state leads to a linear time complexity. For cases where the number of time steps is small, the sequential approach is sufficient. However, in many applications, such as nonlinear optimal control \cite{lewis2012optimal}, where the number of time steps is large and a nonlinear ODE must be solved repeatedly, reduced time complexity is desired. Temporal parallelization decomposes the time domain into multiple intervals and runs operations on the time intervals in parallel by removing recursive dependencies, reducing the linear time complexity.

The Parareal method~\cite{lions2001resolution} consists of solving \eqref{eq:ivp} by using a predictor-corrector approach where a coarse integrator is used for sequentially predicting the intermediate solution states, while the corrections are made by a fine integrator running in parallel. Let us define the coarse and fine integrators as $x(t+\delta t) \approx G(x(t), \delta t)$ and $x(t+\delta t) \approx F(x(t), \delta t)$, respectively. Next, let us introduce the temporal discretization of the integration interval by the intermediate time points $t_0, t_1, \hdots t_M$, where $\delta t = t_{m+1} - t_m$ and $M$ represents the number of time windows. Parareal consists of the following steps. We compute an initial sequence of ODE states using the coarse integrator
\begin{equation}
    x_{m+1}^{(0)} = G\left(x_m^{(0)}, \delta t\right), \quad x_0^{(0)} = \Bar{x},\label{eq:parareal-init}
\end{equation}
and denote $G_m^{(0)} = G\left(x_m^{(0)}, \delta t\right)$. Then, for $k=1, 2, \hdots, K$, we compute in parallel
\begin{equation}
    F_m^{(k-1)} = F\left(x_m^{(k-1)}, \delta t\right),\label{eq:fine-integrator}
\end{equation}
and perform the corrections sequentially
\begin{equation}
    \begin{split}
    G_m^{(k)} &= G\left(x_m^{(k)}, \delta t\right),\\
    x_{m+1}^{(k)} &= G_m^{(k)} + F_m^{(k-1)} - G_m^{(k-1)}.\label{eq:coarse-integrator}
    \end{split}
\end{equation}
After $K$ iterations, the dense trajectory of ODE states can be recovered by concatenating the intermediary states resulting from \eqref{eq:fine-integrator}. The number of time steps for the fine integrator is defined as $N/M$ where $N$ represents the dense trajectory obtained by sequentially concatenating the fine integrator steps. The purpose of this is to make the result of Parareal comparable with regular sequential methods as well as to our proposed method, which we introduce in the following section.

The parallel execution of \eqref{eq:fine-integrator} gives the computational advantage of the method; however, in \eqref{eq:parareal-init} and \eqref{eq:coarse-integrator}, we perform $M$ sequential steps in addition to $K$ overall iterations of the algorithm. Moreover, depending on the choice of discretization for the fine integrator, \eqref{eq:fine-integrator} also requires sequential operations. 

Let us now assume that we originally have $N \gg M$ steps and the coarse integrator $G$ uses a single step while the fine integrator $F$ uses $N/M$ steps. Then, the span complexity is $\mathcal{O}(K \, (M + N/M))$. If  $K < M$, then the span is $\mathcal{O}(M^2 + N)$ in the worst case. 
In the best case scenario, the optimal asymptotic complexity is given by $M = N^{1/2}$, which yields $\mathcal{O}(K \, N^{1/2})$, making the method of square-root complexity. A graphical representation of Parareal is provided in Figure \ref{fig:parareal-graphical}.

In this paper, we aim to introduce a different temporal parallelization strategy with improved parallelization that reduces the time complexity to logarithmic by exploiting parallel associative scans \cite{blelloch1989scans, blelloch1990prefix}.

\begin{figure}[t]
    \centering
    \begin{tikzpicture}[every node/.style={font=\scriptsize}]

\def\M{3} 
\def\Q{3}  

\foreach \i in {0,...,\M} {
  \ifnum\i=0
    \node[draw,minimum width=14mm,minimum height=5mm] (c\i) at (2.2*\i,1.6) {$x_0^{(k)}=\bar{x}$};
  \else
    \node[draw,minimum width=14mm,minimum height=5mm] (c\i) at (2.2*\i,1.6) {$x_{\i}^{(k)}$};
  \fi
}
\foreach \i in {0,...,\numexpr\M-1\relax} {
  \draw[->] (c\i) -- (c\the\numexpr\i+1\relax);
}

\foreach \i in {0,...,\numexpr\M-1\relax} {
  \path let \p1 = ($(c\i)!0.5!(c\the\numexpr\i+1\relax)$) in coordinate (m\i) at (\x1,1.0);
  \foreach \q in {1,...,\Q} {
    \ifnum\q=\Q
      \node[draw,minimum width=14mm,minimum height=5mm] (f\i-\q) at ($(m\i)+(0,-0.8*\q)$) {$F_{\i}^{(k-1)}$};
    \else
      \node[draw,minimum width=14mm,minimum height=5mm] (f\i-\q) at ($(m\i)+(0,-0.8*\q)$) {$x_{\i,\q}^{(k-1)}$};
    \fi
    \ifnum\q>1
      \draw[->] (f\i-\the\numexpr\q-1\relax) -- (f\i-\q);
    \fi
  }
  \draw[->] (c\i) -- (f\i-1);
  \draw[->] (f\i-\Q) -| (c\the\numexpr\i+1\relax);
}

\end{tikzpicture}
    \caption{Parareal dependency graph with $M=3$ windows. The coarse solver $G$ (horizontal top sequence) is single-step. The fine solver $F$ (vertical parallel sequences) takes $N=9$ steps in total ($N/M$ per window). The per-iteration span is $M + N/M$. Selecting $M=\sqrt{N}$ gives $\mathcal{O}(6)$ span complexity.}
    \label{fig:parareal-graphical}
\end{figure}

\section{Temporal parallelization of Newton's method for ODE integration} \label{sec:method}
In this section, we will review the temporal parallelization of prefix-sum operations and tailor it to affine difference equations. Then, we will define the explicit and implicit integration of an ODE as a nonlinear equation, which we solve via Newton's method. Following this strategy will enable us to realize the parallelization of nonlinear ODE solvers. Finally, we provide convergence results for the derived methods.

\subsection{Prefix-sum operations}\label{sec:temporal-parallelization-prefix-sum}
In the most favorable case, Parareal has square-root complexity. However,  a better, logarithmic span-complexity can be achieved by exploiting parallel associative scans \cite{blelloch1989scans, blelloch1990prefix} which are parallel algorithms for computing generalized prefix sums for associative operators. A prefix-sum is defined as follows. Consider an input sequence of elements 
\begin{equation}
    [a_0, a_1, \hdots, a_{N-1}],
\end{equation}
and an associative binary operator $\otimes$. The prefix-sum operation returns a sequence of equal length as the input sequence, containing the following elements
\begin{equation}
    [a_0, (a_0 \otimes a_1), \hdots, (a_0 \otimes a_1 \otimes \cdots \otimes a_{N-1})] = [a_0, a_{0, 1}, \hdots, a_{0,N-1}].\label{eq:prefix-sum}
\end{equation}
The sequential implementation of \eqref{eq:prefix-sum} has linear complexity $\mathcal{O}(N)$. We can reduce it to logarithmic by using a parallel associative scan as described in \cite{blelloch1989scans, blelloch1990prefix}. 

\begin{figure}[t]
    \centering
    \begin{tikzpicture}[font=\small]
    \def\W{12.0}
    \def\s{0.5}                               
    \pgfmathsetmacro{\g}{(\W-16*\s)/15} 
    \def\yleaf{0}
    \def\yone{1.35}
    \def\ytwo{2.70}
    \def\ythree{4.05}

    \foreach \i in {0,...,15} {
        \pgfmathsetmacro{\x}{\i*(\s+\g)}
        \draw (\x,\yleaf) rectangle ++(\s,\s);
    }

    \foreach \offset in {0,8} {

        \foreach \j in {0,1,2,3} {
            \pgfmathtruncatemacro{\a}{\offset + 2*\j}
            \pgfmathtruncatemacro{\b}{\offset + 2*\j + 1}

            \pgfmathsetmacro{\xa}{\a*(\s+\g) + 0.5*\s}
            \pgfmathsetmacro{\xb}{\b*(\s+\g) + 0.5*\s}
            \pgfmathsetmacro{\xp}{(\xa+\xb)/2}
            \pgfmathsetmacro{\xnode}{\xp - 0.5*\s}

            \draw (\xnode,\yone) rectangle ++(\s,\s);
            \draw (\xa,\yleaf+\s) -- (\xp,\yone);
            \draw (\xb,\yleaf+\s) -- (\xp,\yone);
        }

        \foreach \j in {0,1} {
            \pgfmathtruncatemacro{\a}{\offset + 4*\j}
            \pgfmathtruncatemacro{\b}{\offset + 4*\j + 1}
            \pgfmathtruncatemacro{\c}{\offset + 4*\j + 2}
            \pgfmathtruncatemacro{\d}{\offset + 4*\j + 3}

            \pgfmathsetmacro{\xa}{\a*(\s+\g) + 0.5*\s}
            \pgfmathsetmacro{\xb}{\b*(\s+\g) + 0.5*\s}
            \pgfmathsetmacro{\xc}{\c*(\s+\g) + 0.5*\s}
            \pgfmathsetmacro{\xd}{\d*(\s+\g) + 0.5*\s}

            \pgfmathsetmacro{\xleft}{(\xa+\xb)/2}
            \pgfmathsetmacro{\xright}{(\xc+\xd)/2}
            \pgfmathsetmacro{\xp}{(\xleft+\xright)/2}
            \pgfmathsetmacro{\xnode}{\xp - 0.5*\s}

            \draw (\xnode,\ytwo) rectangle ++(\s,\s);
            \draw (\xleft,\yone+\s) -- (\xp,\ytwo);
            \draw (\xright,\yone+\s) -- (\xp,\ytwo);
        }

        \pgfmathtruncatemacro{\a}{\offset}
        \pgfmathtruncatemacro{\b}{\offset+1}
        \pgfmathtruncatemacro{\c}{\offset+2}
        \pgfmathtruncatemacro{\d}{\offset+3}
        \pgfmathtruncatemacro{\e}{\offset+4}
        \pgfmathtruncatemacro{\f}{\offset+5}
        \pgfmathtruncatemacro{\h}{\offset+6}
        \pgfmathtruncatemacro{\k}{\offset+7}

        \pgfmathsetmacro{\xa}{\a*(\s+\g) + 0.5*\s}
        \pgfmathsetmacro{\xb}{\b*(\s+\g) + 0.5*\s}
        \pgfmathsetmacro{\xc}{\c*(\s+\g) + 0.5*\s}
        \pgfmathsetmacro{\xd}{\d*(\s+\g) + 0.5*\s}
        \pgfmathsetmacro{\xe}{\e*(\s+\g) + 0.5*\s}
        \pgfmathsetmacro{\xf}{\f*(\s+\g) + 0.5*\s}
        \pgfmathsetmacro{\xh}{\h*(\s+\g) + 0.5*\s}
        \pgfmathsetmacro{\xk}{\k*(\s+\g) + 0.5*\s}

        \pgfmathsetmacro{\xleft}{(((\xa+\xb)/2) + ((\xc+\xd)/2))/2}
        \pgfmathsetmacro{\xright}{(((\xe+\xf)/2) + ((\xh+\xk)/2))/2}
        \pgfmathsetmacro{\xroot}{(\xleft+\xright)/2}
        \pgfmathsetmacro{\xnode}{\xroot - 0.5*\s}

        \draw (\xnode,\ythree) rectangle ++(\s,\s);
        \draw (\xleft,\ytwo+\s) -- (\xroot,\ythree);
        \draw (\xright,\ytwo+\s) -- (\xroot,\ythree);
    }

    \foreach \i in {0,1,2,3,4,5,6,7} {
        \pgfmathsetmacro{\x}{\i*(\s+\g) + 0.5*\s}
        \node at (\x,\yleaf + 0.5*\s) {\scriptsize $a_{\i}$};
    }

    \node at ({0.5*((0*(\s+\g)+0.5*\s)+(1*(\s+\g)+0.5*\s))},\yone+0.5*\s) {\scriptsize $a_{0,1}$};
    \node at ({0.5*((2*(\s+\g)+0.5*\s)+(3*(\s+\g)+0.5*\s))},\yone+0.5*\s) {\scriptsize $a_{2,3}$};
    \node at ({0.5*((4*(\s+\g)+0.5*\s)+(5*(\s+\g)+0.5*\s))},\yone+0.5*\s) {\scriptsize $a_{4,5}$};
    \node at ({0.5*((6*(\s+\g)+0.5*\s)+(7*(\s+\g)+0.5*\s))},\yone+0.5*\s) {\scriptsize $a_{6,7}$};

    \node at ({0.5*(0.5*((0*(\s+\g)+0.5*\s)+(1*(\s+\g)+0.5*\s)) + 0.5*((2*(\s+\g)+0.5*\s)+(3*(\s+\g)+0.5*\s)))},\ytwo+0.5*\s) {\scriptsize $a_{0,3}$};
    \node at ({0.5*(0.5*((4*(\s+\g)+0.5*\s)+(5*(\s+\g)+0.5*\s)) + 0.5*((6*(\s+\g)+0.5*\s)+(7*(\s+\g)+0.5*\s)))},\ytwo+0.5*\s) {\scriptsize $a_{4,7}$};

    \pgfmathsetmacro{\xUpRoot}{0.5*(0.5*(0.5*((0*(\s+\g)+0.5*\s)+(1*(\s+\g)+0.5*\s)) + 0.5*((2*(\s+\g)+0.5*\s)+(3*(\s+\g)+0.5*\s))) + 0.5*(0.5*((4*(\s+\g)+0.5*\s)+(5*(\s+\g)+0.5*\s)) + 0.5*((6*(\s+\g)+0.5*\s)+(7*(\s+\g)+0.5*\s))))}
    \node at (\xUpRoot,\ythree+0.5*\s) {\scriptsize $a_{0,7}$};
    \node[right=2pt] at (\xUpRoot+0.25,\ythree+0.5*\s) {\scriptsize $=z_8$};

    \foreach \i/\txt in {
        8/{$\mathbf{e}$},
        9/{$a_0$},
        10/{$a_{0,1}$},
        11/{$a_{0,2}$},
        12/{$a_{0,3}$},
        13/{$a_{0,4}$},
        14/{$a_{0,5}$},
        15/{$a_{0,6}$}
    }{
        \pgfmathsetmacro{\x}{\i*(\s+\g) + 0.5*\s}
        \node at (\x,\yleaf + 0.5*\s) {\scriptsize \txt};
    }

    \foreach \i/\z in {
        8/{},
        9/{$=z_1$},
        10/{$=z_2$},
        11/{$=z_3$},
        12/{$=z_4$},
        13/{$=z_5$},
        14/{$=z_6$},
        15/{$=z_7$}
    }{
        \pgfmathsetmacro{\x}{\i*(\s+\g) + 0.5*\s}
        \node[below=2pt] at (\x,\yleaf) {\scriptsize \z};
    }

    \node at ({0.5*((8*(\s+\g)+0.5*\s)+(9*(\s+\g)+0.5*\s))},\yone+0.5*\s) {\scriptsize $\mathbf{e}$};
    \node at ({0.5*((10*(\s+\g)+0.5*\s)+(11*(\s+\g)+0.5*\s))},\yone+0.5*\s) {\scriptsize $a_{0,1}$};
    \node at ({0.5*((12*(\s+\g)+0.5*\s)+(13*(\s+\g)+0.5*\s))},\yone+0.5*\s) {\scriptsize $a_{0,3}$};
    \node at ({0.5*((14*(\s+\g)+0.5*\s)+(15*(\s+\g)+0.5*\s))},\yone+0.5*\s) {\scriptsize $a_{0,5}$};

    \node at ({0.5*(0.5*((8*(\s+\g)+0.5*\s)+(9*(\s+\g)+0.5*\s)) + 0.5*((10*(\s+\g)+0.5*\s)+(11*(\s+\g)+0.5*\s)))},\ytwo+0.5*\s) {\scriptsize $\mathbf{e}$};
    \node at ({0.5*(0.5*((12*(\s+\g)+0.5*\s)+(13*(\s+\g)+0.5*\s)) + 0.5*((14*(\s+\g)+0.5*\s)+(15*(\s+\g)+0.5*\s)))},\ytwo+0.5*\s) {\scriptsize $a_{0,3}$};

    \node at ({0.5*(0.5*(0.5*((8*(\s+\g)+0.5*\s)+(9*(\s+\g)+0.5*\s)) + 0.5*((10*(\s+\g)+0.5*\s)+(11*(\s+\g)+0.5*\s))) + 0.5*(0.5*((12*(\s+\g)+0.5*\s)+(13*(\s+\g)+0.5*\s)) + 0.5*((14*(\s+\g)+0.5*\s)+(15*(\s+\g)+0.5*\s))))},\ythree+0.5*\s) {\scriptsize $\mathbf{e}$};
\end{tikzpicture}
    \vspace{-3em}
    \caption{Parallel scan for all prefix sums of an $N=8$ input sequence where the associative operation represents the function composition and the elements are affine functions parameterized as in \eqref{eq:assoc-operator}. The up-sweep (left) starts from the leaves $[a_0, \hdots, a_7]$, and applies the composition operation on each level, yielding the final state $z_8 = a_0 \otimes \cdots \otimes a_7$ at the root. The down-sweep (right) starts from the root by initializing it with the identity function $\mathbf{e}(x) = x$. Then on each level, the root is copied to the left child, while the right child gets the result of applying the function composition between the root and its left child from the up-sweep tree. This yields the intermediary prefix sums at the leaves of the down-sweep tree. Performing the up-sweep and down-sweep has a complexity span of $\mathcal{O}(\log_2(8)) = \mathcal{O}(3)$ respectively. Therefore, the computation of all prefix sums via an associative scan has a span of $\mathcal{O}(6)$.} 
    \label{fig:up-and-down-sweep}
\end{figure}

\begin{figure}[t]
    \centering
    \begin{tikzpicture}[font=\small]
    \def\W{12.0}
    \def\s{0.75}
    \pgfmathsetmacro{\g}{(\W-8*\s)/7}

    \def\ybox{0}

    \foreach \i in {0,...,7} {
        \pgfmathsetmacro{\x}{\i*(\s+\g)}
        \draw (\x,\ybox) rectangle ++(\s,\s);
    }

    \foreach \i in {0,...,6} {
        \pgfmathtruncatemacro{\qind}{\i+1}

        \pgfmathsetmacro{\xleft}{\i*(\s+\g)+\s}
        \pgfmathsetmacro{\xright}{(\i+1)*(\s+\g)}
        \pgfmathsetmacro{\xmid}{0.5*(\xleft+\xright)}
        \pgfmathsetmacro{\ymid}{\ybox+0.5*\s}

        \draw[->] (\xleft,\ymid) -- (\xmid-0.13,\ymid);

        \node[inner sep=1pt] (op\i) at (\xmid,\ymid) {\scriptsize $\otimes$};

        \draw[->] (\xmid+0.13,\ymid) -- (\xright,\ymid);

        \node[above=18pt, inner sep=0pt] (q\i) at (\xmid,\ymid) {\scriptsize $a_{\qind}$};

        \draw[->] (q\i.south) -- (op\i.north);
    }

    \node at ({0*(\s+\g)+0.5*\s},\ybox+0.5*\s) {\scriptsize $a_0$};
    \node at ({1*(\s+\g)+0.5*\s},\ybox+0.5*\s) {\scriptsize $a_{0,1}$};
    \node at ({2*(\s+\g)+0.5*\s},\ybox+0.5*\s) {\scriptsize $a_{0,2}$};
    \node at ({3*(\s+\g)+0.5*\s},\ybox+0.5*\s) {\scriptsize $a_{0,3}$};
    \node at ({4*(\s+\g)+0.5*\s},\ybox+0.5*\s) {\scriptsize $a_{0,4}$};
    \node at ({5*(\s+\g)+0.5*\s},\ybox+0.5*\s) {\scriptsize $a_{0,5}$};
    \node at ({6*(\s+\g)+0.5*\s},\ybox+0.5*\s) {\scriptsize $a_{0,6}$};
    \node at ({7*(\s+\g)+0.5*\s},\ybox+0.5*\s) {\scriptsize $a_{0,7}$};

    \node[below=2pt, inner sep=0pt] at ({0*(\s+\g)+0.5*\s},\ybox) {\scriptsize $=z_1$};
    \node[below=2pt, inner sep=0pt] at ({1*(\s+\g)+0.5*\s},\ybox) {\scriptsize $=z_2$};
    \node[below=2pt, inner sep=0pt] at ({2*(\s+\g)+0.5*\s},\ybox) {\scriptsize $=z_3$};
    \node[below=2pt, inner sep=0pt] at ({3*(\s+\g)+0.5*\s},\ybox) {\scriptsize $=z_4$};
    \node[below=2pt, inner sep=0pt] at ({4*(\s+\g)+0.5*\s},\ybox) {\scriptsize $=z_5$};
    \node[below=2pt, inner sep=0pt] at ({5*(\s+\g)+0.5*\s},\ybox) {\scriptsize $=z_6$};
    \node[below=2pt, inner sep=0pt] at ({6*(\s+\g)+0.5*\s},\ybox) {\scriptsize $=z_7$};
    \node[below=2pt, inner sep=0pt] at ({7*(\s+\g)+0.5*\s},\ybox) {\scriptsize $=z_8$};

    \path[use as bounding box] (0,-0.45) rectangle (\W,1.75);
\end{tikzpicture}
    \vspace{-3em}
    \caption{Sequential computation of all prefix sums of an $N=8$ input sequence where the associative
    operation represents the function composition and the elements are affine functions as defined in
    (3.3). The computation of all prefix sums has a span of $\mathcal{O}(8)$.} 
    \label{fig:sequential}
\end{figure}

\begin{algorithm}[t]
\caption{Parallel scan \cite{blelloch1990prefix}}
\label{alg:par-scan}
\begin{algorithmic}[1]
\REQUIRE Associative elements $[a_0, a_1, \hdots, a_{N-1}]$ and associative operator $\otimes$
\ENSURE All prefix sums $[a_0, (a_0 \otimes a_1), \hdots, (a_0\otimes a_1 \otimes \cdots \otimes a_{N-1})]$
\STATE $b \gets \mathrm{copy}(a)$ \COMMENT{Store the input}
\FOR[Up-sweep]{$n \gets 0$ \textbf{to} $\log_2N-1$}
\FOR[Compute in parallel]{$i \gets 0$ \textbf{to} $N-1$ \textbf{by} $2^{d+1}$}
    \STATE $j\gets i + 2^d$
    \STATE $n\gets i + 2^{d+1}$
    \STATE $a_n \gets a_j \otimes a_n$
\ENDFOR
\ENDFOR
\STATE $a_N \gets \mathbf{e}$ \COMMENT{The identity element $\mathbf{e}$ represents the identity function $\mathbf{e}(x)=x$}
\FOR[Down-sweep]{$d\gets\log_2N-1$ \textbf{to} $0$}
\FOR[Compute in parallel]{$i\gets0$ \textbf{to} $N-1$ \textbf{by} $2^{d+1}$}
\STATE $j\gets i+2^d$
\STATE $k\gets i+2^{d+1}$
\STATE $t\gets a_j$
\STATE $a_j \gets a_n$
\STATE $a_n \gets a_n \otimes t$
\ENDFOR
\ENDFOR
\FOR[Final pass to form the inclusive scan; Compute in parallel]{$n\gets 1$ \textbf{to} $N$}
\STATE $a_n \gets a_n \otimes b_n$
\ENDFOR
\end{algorithmic}
\end{algorithm}

In the following, we will introduce the solution to an affine difference equation as a prefix-sum operation and derive its temporal parallelization based on the results of \cite{sarkka2022temporal}. Consider the difference equation
\begin{equation}
    z_{n+1} = q_n(z_n) = F_n z_n + c_n, \quad n = \{0, 1, \hdots, N-1\}, \quad z_0 = \Bar{z},\label{eq:linear-model}
\end{equation}
where $z\in\mathbb{R}^{d_z}$ is the state, $F \in \mathbb{R}^{d_z\times d_z}$, and $c \in \mathbb{R}^{d_z}$. We can solve the difference equation \eqref{eq:linear-model} from an arbitrary initial state $z_n$ to a final state $z_i$ by performing the following function compositions
\begin{equation}
    \begin{split}
         z_i &= (q_{i-1} \circ \cdots \circ q_n)(z_n), \quad i>n.\\
         &= F_{n, i} z_n + c_{n,i}
    \end{split}
\end{equation}
Let us use the tuple $(F_{n,i}, c_{n_i})$ that parameterizes the resulting function composition to denote an associative element $a_{n,i}$. The combination of two elements $a_{j, i}$ and $a_{n, j}$ by the operator $\otimes$ is defined as
\begin{equation}
    \begin{split}
        a_{n, i} &= a_{j,i} \otimes a_{n,j}, \quad n < j < i,\\
        (F_{n, i}, c_{n, i}) &= (F_{j, i}, c_{j,i}) \otimes  (F_{n, j}, c_{n,j})\\
        &= (F_{j,i}F_{n,j}, F_{j,i}c_{n,j}+c_{j,i}).
    \end{split}\label{eq:assoc-operator}
\end{equation}
\begin{lemma}
    The binary operation $\otimes$ defined in \eqref{eq:assoc-operator} is associative.
\end{lemma}
\begin{proof}
    Let us consider three elements $a_{n,l}, a_{l, j}, a_{j, i}$, $n<l<j<i$. We will show that the following equality holds
    \begin{equation}
        (a_{j, i} \otimes a_{l, j}) \otimes a_{n,l} = a_{j, i} \otimes (a_{l, j} \otimes a_{n,l}).
    \end{equation}
    Starting with the left-hand side, we have
    \begin{equation}
        \begin{split}
            &[(F_{j, i}, c_{j, i}) \otimes (F_{l, j}, c_{l,j})] \otimes (F_{n,l}, c_{n,l}) = (F_{j, i}F_{l, j}, F_{j, i}c_{l, j} + c_{j, i}) \otimes (F_{n,l}, c_{n,l})\\
            &= (F_{l, i}, c_{l, i}) \otimes (F_{n,l}, c_{n,l}) = (F_{l, i} F_{n, l}, F_{l,i}c_{n, l} + c_{l,i}) = (F_{n, i}, c_{n, i}).
        \end{split}
    \end{equation}
    Continuing with the right-hand side
    \begin{equation}
        \begin{split}
            &(F_{j, i}, c_{j, i}) \otimes [(F_{l, j}, c_{l,j}) \otimes (F_{n,l}, c_{n,l})] = (F_{j, i}, c_{j, i}) \otimes (F_{l,j}F_{n, l}, F_{l, j}c_{n,l} + c_{l, j})\\
            &= (F_{j, i}, c_{j, i}) \otimes (F_{n,j}, c_{n,j}) = (F_{j, i} F_{n, j}, F_{j,i}c_{n, j} + c_{j,i}) = (F_{n, i}, c_{n, i}),
        \end{split}
    \end{equation}
    which proves the claim.
\end{proof}
The associative elements in \eqref{eq:prefix-sum} are then initialized as
\begin{equation}
    \begin{split}
        a_n &= (F_{n, n+1}, c_{n, n+1}), \quad F_{n, n+1} = F_n, \quad c_{n, n+1} = c_n, \quad 1 \leq n \leq  N-1,\\
        a_0 &= (F_{0,1}, c_{0,1}) = (0,  F_0 z_0 + c_0).
    \end{split}
    \label{eq:initialization}
\end{equation}
 By performing a parallel associative scan, we can compute the states $z_{1:N}$ in $\mathcal{O}(\mathrm{log}N)$-time
\begin{equation}
    z_n = a_0 \otimes a_1 \otimes \cdots \otimes a_{n-1}, \quad 0\leq n \leq N.
\end{equation}
The implementation details of the parallel associative scan are presented in Algorithm \ref{alg:par-scan}. However, the algorithm is already implemented in various software packages, for example, JAX \cite{jax2018github}.

An illustrative example of the parallel scan for computing all prefix sums of $N=8$ elements is shown in Figure \ref{fig:up-and-down-sweep}. For comparison, we provide a diagram for the same computation executed sequentially in Figure \ref{fig:sequential}.

We observe that the result from \ref{eq:assoc-operator} does not extend to the nonlinear case because the composition of nonlinear functions does not correspond to associative operations on finite-dimensional elements. Therefore, the derived strategy is limited to affine recursions. To overcome this hurdle, we formulate the equations resulting from the numerical approximation of ODE solutions as a system of nonlinear equations and solve it via Newton's method. At each iteration, the resulting Newton step has an affine recursive structure, and thus it can be computed in parallel with an associative scan as presented in this section.

\subsection{Parallel-in-time explicit ODE solvers}\label{sec:pint-explicit-ode}
Consider the initial value problem \eqref{eq:ivp}, and recall the explicit ODE solution approximation methods \eqref{eq:explicit-euler} and \eqref{eq:explicit-rk} introduced in Section \ref{sec:problem-formulation}, and the nonlinear recursion resulting from their generalization 
\begin{equation}
    x_{n+1} = x_n + g_n(x_n, \delta t), \quad x_0 = \Bar{x}, \label{eq:explicit-ode}
\end{equation}
where $\delta t$ is the discretization step. 
If we explicitly write the iterations resulting from \eqref{eq:explicit-ode}, we obtain the following nonlinear equation
\begin{equation}
    h(\xi)  = 
    \begin{bmatrix}
        h_1\\ h_2 \\ \vdots\\ h_N
    \end{bmatrix} = \begin{bmatrix}
        x_1 - x_0 - g_0(x_0, \delta t)\\
        x_2 - x_1 - g_1(x_1, \delta t)\\
        \vdots\\
        x_N - x_{N-1} - g_{N-1}(x_{N-1}, \delta t)
    \end{bmatrix}= 0,\label{eq:root-finding-problem}
\end{equation}
where $\xi^\top = \begin{bmatrix}x_1 ^\top & \hdots & x_N^\top\end{bmatrix}, \, \xi \in \mathbb{R}^{d_xN}$, is the concatenated vector of all intermediary ODE states. Problem \eqref{eq:root-finding-problem} poses a root-finding problem that we solve with Newton's method \cite{nocedal2006numerical}. The Jacobian $\partial h / \partial \xi = H(\xi)$ has the following entries 
\begin{equation}
    H_{ij} = \frac{\partial h_i}{\partial x_j} = \begin{cases}I, & i = j,\\
    -I - \frac{\partial g_j}{\partial x_j}, & j = i - 1,\\
    0, & \text{otherwise},\end{cases}\label{eq:jacobian}
\end{equation}
where $I \in \mathbb{R}^{d_x \times d_x}$ represents the identity matrix. Given an initial guess $\xi^{(0)}$, Newton's method consists of the following iterations
\begin{align}
    \Delta \xi^{(k)} &\gets -H^{-1}\left(\xi^{(k)}\right) h\left(\xi^{(k)}\right),\label{eq:newton-step}\\
    \xi^{(k+1)} &\gets \xi^{(k)} + \Delta \xi^{(k)},\label{eq:newton-iterates}
\end{align}
where $\Delta \xi^\top = \begin{bmatrix}\Delta x_1^\top&\hdots&\Delta x_N^\top\end{bmatrix}$ is the Newton step. Given the lower-bidiagonal structure of \eqref{eq:jacobian}, we can expand \eqref{eq:newton-step} as follows
\begin{equation}
\begin{split}
\Delta x_1 &= -H_{11}^{-1} h_1, \\
\Delta x_2 &= -H_{22}^{-1}(h_2 + H_{21} \Delta x_1), \\
\vdots \\
\Delta x_N &= -H_{NN}^{-1}(h_N + H_{NN-1} \Delta x_{N-1}).\label{eq:newton-step-explicit}
\end{split}
\end{equation}
Note that $H_{ii} = I, \, \forall i=1, \hdots, N$. Hence, the scheme in \eqref{eq:newton-step-explicit} does not require any matrix inversion. Substituting \eqref{eq:jacobian} in \eqref{eq:newton-step-explicit}, we obtain
\begin{equation}
    \Delta x_n = \left(I + \frac{\partial g_{n-1}}{\partial x_{n-1}}\right) \Delta x_{n-1} - h_n, \quad n = 2, \hdots, N, \quad \Delta x_1 = -h_1.\label{eq:newton-step-recursion}
\end{equation}
We observe that \eqref{eq:newton-step-recursion} has the same affine structure as \eqref{eq:linear-model} and, therefore, we can realize its temporal parallelization following the steps presented in Section \ref{sec:temporal-parallelization-prefix-sum}. Furthermore, one can see the straightforward connection of the derived Newton step and the multiple-shooting strategy \cite[Chap.\,2]{gander2024time}\cite{gander2007analysis}. In this article, the concatenated vector of ODE states $\xi$ has the same meaning as the shooting parameters in \cite[Chap.\,2]{gander2024time}. In the next section, we provide a convergence analysis of the parallel Newton step for the explicit ODE numerical solution.

\subsection{Theoretical results for the parallel-explicit Newton's method} In this section, we prove that Newton's method for explicit ODE numerical solution converges at a quadratic rate, by roughly following the general proof of Newton's method from \cite{nocedal2006numerical}. To ensure the convergence of the proposed method, we require the following assumption.

\begin{assumption}\label{assum2}
    The Jacobian $H$ of the function $h$, is Lipschitz continuous in the neighborhood of $\xi^{*}$, that is, there exists $L>0$ such that
    \begin{equation}
        \|H(\xi)-H(\xi^{*})\| \leq L \|\xi-\xi^{*}\|,
    \end{equation}
    where $h\left(\xi^*\right)=0$.
\end{assumption}

\begin{theorem}\label{theorem1}
    Suppose Assumption \ref{assum2} holds. Let the sequence of Newton iterates $\left\{\xi^{(k)}\right\}$ be defined by \eqref{eq:newton-iterates}, and let the Newton step $\Delta \xi^{(k)}$ be defined by \eqref{eq:newton-step}. If the initial guess $\xi^{(0)}$ is sufficiently close to $\xi^*$, the sequence $\{\xi^{(k)}\}$ converges quadratically to $\xi^*$ and the sequence $\left\{\left\|h\left(\xi^{(k)}\right)\right\|\right\}$ converges quadratically to zero.
\end{theorem}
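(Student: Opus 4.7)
The plan is to adapt the standard local convergence proof of Newton's method (see, e.g., Theorem 11.2 in \cite{nocedal2006numerical}) to our setting. Starting from \eqref{eq:newton-step}--\eqref{eq:newton-iterates} and using $h(\xi^{*})=0$, I would rewrite the error as
\begin{equation*}
\xi^{(k+1)}-\xi^{*} = H(\xi^{(k)})^{-1}\bigl[H(\xi^{(k)})(\xi^{(k)}-\xi^{*}) - \bigl(h(\xi^{(k)})-h(\xi^{*})\bigr)\bigr],
\end{equation*}
and then control the two factors on the right separately.

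For the bracketed term, I would expand $h(\xi^{(k)})-h(\xi^{*})$ by the fundamental theorem of calculus as $\int_{0}^{1} H\bigl(\xi^{*}+t(\xi^{(k)}-\xi^{*})\bigr)(\xi^{(k)}-\xi^{*})\,dt$, substitute it back into the bracket, and apply Assumption \ref{assum2} together with the triangle inequality to bound $\|H(\xi^{(k)})-H(\xi^{*}+t(\xi^{(k)}-\xi^{*}))\| \le L(1+t)\|\xi^{(k)}-\xi^{*}\|$. Integrating in $t$ then gives an estimate of order $\|\xi^{(k)}-\xi^{*}\|^{2}$. For the factor $\|H(\xi^{(k)})^{-1}\|$, I would exploit the specific structure of \eqref{eq:jacobian}: $H(\xi)$ is unit lower-bidiagonal, hence invertible at \emph{every} $\xi$ without any extra nondegeneracy hypothesis at $\xi^{*}$, and Lipschitz continuity yields a uniform upper bound $\|H(\xi^{(k)})^{-1}\| \le M$ on a bounded neighborhood of $\xi^{*}$. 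Combining these two estimates produces the quadratic contraction $\|\xi^{(k+1)}-\xi^{*}\| \le C\|\xi^{(k)}-\xi^{*}\|^{2}$.

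A straightforward induction, after choosing the initial ball small enough that $C\|\xi^{(0)}-\xi^{*}\|<1$, then ensures the iterates never leave the Lipschitz region and establishes quadratic convergence of $\{\xi^{(k)}\}$ to $\xi^{*}$. The quadratic decay of $\{\|h(\xi^{(k)})\|\}$ follows by writing $h(\xi^{(k+1)}) = h(\xi^{(k)}+u^{(k)})$ and expanding again by the fundamental theorem of calculus; the leading term $h(\xi^{(k)}) + H(\xi^{(k)})u^{(k)}$ cancels because $u^{(k)} = -H(\xi^{(k)})^{-1}h(\xi^{(k)})$, leaving an integrand that is quadratic in $\|u^{(k)}\| \le M\|h(\xi^{(k)})\|$. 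I expect the main obstacle to be the bookkeeping required to keep every iterate inside the region where Assumption \ref{assum2} is in force — routine but slightly delicate — rather than any of the individual estimates, all of which are standard consequences of Taylor's theorem together with the favorable structure of the lower-bidiagonal Jacobian.
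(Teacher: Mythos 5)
Your proposal is correct and follows the same skeleton as the paper's proof: the identity $\xi^{(k+1)}-\xi^{*}=H^{-1}(\xi^{(k)})\bigl[H(\xi^{(k)})(\xi^{(k)}-\xi^{*})-(h(\xi^{(k)})-h(\xi^{*}))\bigr]$, a Taylor-type expansion of $h(\xi^{(k)})-h(\xi^{*})$ combined with Assumption \ref{assum2} to get the $O(\|\xi^{(k)}-\xi^{*}\|^{2})$ bound on the bracket, a uniform bound on $\|H^{-1}(\xi^{(k)})\|$ near $\xi^{*}$, and the same cancellation $h(\xi^{(k)})+H(\xi^{(k)})u^{(k)}=0$ for the residual decay. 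The one genuinely different ingredient is your bound on $\|H^{-1}(\xi^{(k)})\|$: the paper proves it via a perturbation argument around $H(\xi^{*})$, writing $H(\xi^{(k)})^{-1}$ as a Neumann series and fixing the radius $r=\tfrac{1}{2L\|H^{-1}(\xi^{*})\|}$ to obtain $\|H^{-1}(\xi^{(k)})\|\le 2\|H^{-1}(\xi^{*})\|$, whereas you exploit the structure of \eqref{eq:jacobian} directly --- $H=I+N$ with $N$ strictly block lower triangular, hence nilpotent, so $H^{-1}=\sum_{n=0}^{N-1}(-N)^{n}$ exists everywhere and is uniformly bounded on any bounded neighborhood. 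Your route is more elementary, avoids any invertibility hypothesis at $\xi^{*}$, and gives a bound valid globally rather than only on a carefully chosen ball; the paper's route has the advantage of carrying over verbatim to the implicit case \eqref{eq:jacobian-implicit}, where the diagonal blocks are no longer identities and invertibility must be assumed (Assumption \ref{assum3}). Two further small points in your favor: your integral form of Taylor's theorem with the triangle-inequality detour through $\xi^{*}$ uses only the stated form of Assumption \ref{assum2} (which bounds differences from $H(\xi^{*})$ only), whereas the paper's mean-value step applies the Lipschitz estimate between two points neither of which is $\xi^{*}$; and you make explicit the induction keeping all iterates inside the ball, which the paper leaves implicit.
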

\begin{proof}
The complete mathematical proof is given in Appendix
\ref{ap:convergence_explicit_ODEs}. In the following, we summarize the main steps. To prove the quadratic convergence of the proposed parallel Newton step, we roughly follow the convergence analysis of Newton's method while tailoring the bounds to the specific affine recursive structure of our explicit ODE solver. By leveraging the Lipschitz continuity of the Jacobian $H$ (Assumption \ref{assum2}) , we can establish an initial quadratic bound on the iteration error $\left\|\xi^{(k+1)}-\xi^{*}\right\|$. The critical point in our specific setting here is bounding the inverse of the Jacobian, $\left\|H^{-1}(\xi^{k})\right\|$, during the iterative process. Since $H$ in \eqref{eq:jacobian} is a block lower bidiagonal matrix with identity blocks on the main diagonal, it is inherently invertible at the exact solution $\xi^*$. Consequently, by applying a Neumann series expansion, we guarantee that the inverse remains bounded. This implies that the matrix inversion does not amplify errors, thereby preserving the classical quadratic convergence rate for both the sequential iterates and the residual norms.
\end{proof}
A numerical analysis of the proven convergence results is presented in Section \ref{sec:explicit-experiments}. Next, let us derive a parallel-in-time implicit ODE solver and analyze its convergence by following the same strategy as we did in the explicit case. 

\subsection{Parallel-in-time implicit ODE solvers} \label{sec:pint-implicit-ode}
Recall the implicit ODE solution approximation methods \eqref{eq:implicit-euler} and \eqref{eq:implicit-rk} presented in Section \ref{sec:problem-formulation} as well as their generalization to
\begin{equation}
    x_{n+1} = x_n + g_n(x_n, x_{n+1}, \delta t), \quad x_0 = \Bar{x} \label{eq:implicit-ode}.
\end{equation}
Examples here include backward Euler \eqref{eq:implicit-euler} or the trapezoidal rule
\begin{equation}
    g(x_n, x_{n+1}, \delta t) = f(x_{n}) \frac{\delta t}{2} + f(x_{n+1}) \frac{\delta t}{2}.
\end{equation}
The explicit rollout of \eqref{eq:implicit-ode} yields the following nonlinear equation
\begin{equation}
    h(\xi)  = 
    \begin{bmatrix}
        h_1\\ h_2 \\ \vdots\\ h_N
    \end{bmatrix} = \begin{bmatrix}
        x_1 - x_0 - g_0(x_0, x_1, \delta t)\\
        x_2 - x_1 - g_1(x_1, x_2, \delta t)\\
        \vdots\\
        x_N - x_{N-1} - g_{N-1}(x_{N-1}, x_{N}\delta t)
    \end{bmatrix}= 0,\label{eq:root-finding-problem-implicit}
\end{equation}
where $\xi^\top = \begin{bmatrix}x_1 ^\top & \hdots & x_N^\top\end{bmatrix}, \, \xi \in \mathbb{R}^{d_xN}$ is the concatenated vector of intermediary ODE states. We deploy the same strategy for solving the root-finding problem \eqref{eq:root-finding-problem-implicit} as in Section \ref{sec:pint-explicit-ode}. The Jacobian $\partial h/\partial \xi = H$ has the following lower-bidiagonal structure 
\begin{equation}
    H_{ij} = \frac{\partial h_i}{\partial x_j} = \begin{cases}I - \frac{\partial g_{i-1}}{\partial x_i}, & i = j,\\
    -I - \frac{\partial g_j}{\partial x_j}, & j = i - 1,\\
    0, & \text{otherwise},\end{cases}\label{eq:jacobian-implicit}
\end{equation}
and, therefore, the explicit iterations for the Newton step entries are identical to \eqref{eq:newton-step-explicit}. For convenience, we write the Newton iterations again
\begin{align}
    \Delta \xi^{(k)} &\gets -H^{-1}\left(\xi^{(k)}\right) h\left(\xi^{(k)}\right),\label{eq:newton-step-implicit}\\
    \xi^{(k+1)} &\gets \xi^{(k)} + \Delta \xi^{(k)}.\label{eq:newton-iterates-implicit}
\end{align}
Substituting \eqref{eq:jacobian-implicit}, we obtain the following affine recursion
\begin{equation}
    \begin{split}
        \Delta x_n &= -\left(I - \frac{\partial g_{n-1}}{\partial x_n}\right)^{-1}\left[\left(-I - \frac{\partial g_{n-1}}{\partial x_{n-1}}\right) \Delta x_{n-1} + h_n\right],
    \\
    \Delta x_1 &= -\left(I - \frac{\partial g_0}{\partial x_1}\right)^{-1} h_1.\label{eq:newton-step-implicit-recursion}
    \end{split}
\end{equation}
We can realize the temporal parallelization of \eqref{eq:newton-step-implicit-recursion} following the steps presented in Section \ref{sec:temporal-parallelization-prefix-sum}. In the next Section, we provide a convergence analysis of the parallel Newton step for the implicit ODE numerical solution. 

\subsection{Theoretical results for the parallel-implicit Newton's method}
The implicit method introduced in Section \ref{sec:pint-explicit-ode} enjoys the same quadratic convergence results as the explicit method. Let us prove them next.
\begin{assumption}\label{assum3}
    The determinants 
    \begin{equation}
\det\left(\frac{\partial g(x_{n-1}^*,x_{n}^*,\delta t)}{\partial x_n^*}-I\right) \neq 0, \quad n = 1, \hdots, N,
\end{equation}
where $(\xi^*)^\top = \begin{bmatrix}(x_1^*)^\top & \hdots & (x_N^*)^\top\end{bmatrix}$, and $h(\xi^*)=0$. 
\end{assumption}
\begin{theorem}
Suppose that Assumptions \ref{assum2} and \ref{assum3} are satisfied for the Jacobian $H$ defined in \eqref{eq:jacobian-implicit}.
Let the Newton iterates $\{\xi^{(k)}\}$ be generated by \eqref{eq:newton-iterates-implicit} with Newton steps $\Delta \xi^{(k)}$ given in \eqref{eq:newton-step-implicit}.
If the initial point $\xi^{(0)}$ is sufficiently close to the solution $\xi^*$, then the sequence $\{\xi^{(k)}\}$ converges to $\xi^*$ with quadratic rate.
Moreover, the residuals $\|h(\xi^{(k)})\|$ decay to zero quadratically.
\end{theorem}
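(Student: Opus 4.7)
The plan is to mirror the proof of Theorem \ref{theorem1} essentially line by line, since the only structural change from the explicit to the implicit setting is the form of the diagonal blocks of the Jacobian $H$ in \eqref{eq:jacobian-implicit}. The core identity
\[
\xi^{(k)} + u^{(k)} - \xi^* = H^{-1}\!\left(\xi^{(k)}\right)\!\bigl[H\!\left(\xi^{(k)}\right)\!(\xi^{(k)}-\xi^*) - (h(\xi^{(k)}) - h(\xi^*))\bigr]
\]
and the Taylor-plus-Lipschitz estimate
\[
\left\|H\!\left(\xi^{(k)}\right)(\xi^{(k)}-\xi^*) - (h(\xi^{(k)}) - h(\xi^*))\right\| \leq L\,\|\xi^{(k)} - \xi^*\|^{2}
\]
use only the definition of the Newton step, the identity $h(\xi^*)=0$, and Assumption \ref{assum2}, none of which depend on the specific entries of $H$. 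Hence they transfer to the implicit case without modification.

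The first substantive adaptation is to establish that $H(\xi^*)$ is invertible, which was automatic in the explicit case because $H_{ii}=I$. Here the diagonal blocks are $I-\partial g_{i-1}/\partial x_i^{*}$, and this is exactly where Assumption \ref{assum3} enters: because $H(\xi^*)$ is block lower bidiagonal, $\det H(\xi^{*})$ equals the product of the determinants of its diagonal blocks, all of which are nonzero by hypothesis. Therefore $H^{-1}(\xi^{*})$ exists with finite norm, and the Neumann-series argument of Theorem \ref{theorem1} can be replayed verbatim: fixing $r = 1/\bigl(2L\|H^{-1}(\xi^{*})\|\bigr)$ and restricting to the ball $\|\xi^{(k)}-\xi^{*}\|\leq r$, Assumption \ref{assum2} yields $\|I-H^{-1}(\xi^{*})H(\xi^{(k)})\|\leq 1/2$, and consequently
\[
\|H^{-1}(\xi^{(k)})\|\leq 2\,\|H^{-1}(\xi^{*})\|.
\]

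Combining this uniform bound with the quadratic Taylor estimate above gives $\|\xi^{(k)}+u^{(k)}-\xi^{*}\|\leq 2L\|H^{-1}(\xi^{*})\|\,\|\xi^{(k)}-\xi^{*}\|^{2}$, establishing the quadratic convergence of $\{\xi^{(k)}\}$. The residual bound then follows by the same Taylor expansion used at the end of Theorem \ref{theorem1}: expanding $h(\xi^{(k+1)})$ around $\xi^{(k)}$, using Lipschitz continuity of $H$ and the defining relation $h(\xi^{(k)}) + H(\xi^{(k)})u^{(k)} = 0$, one obtains $\|h(\xi^{(k+1)})\|\leq L\|H^{-1}(\xi^{(k)})\|^{2}\,\|h(\xi^{(k)})\|^{2}$, which combined with the uniform bound above gives $\|h(\xi^{(k+1)})\|\leq 4L\|H^{-1}(\xi^{*})\|^{2}\,\|h(\xi^{(k)})\|^{2}$.

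The main obstacle is conceptual rather than computational: one must recognize that Assumption \ref{assum3} is precisely what replaces the free invertibility of $H$ available in the explicit case, and that the block lower bidiagonal structure of \eqref{eq:jacobian-implicit} reduces invertibility of $H(\xi^{*})$ to the block-diagonal condition on $I-\partial g_{i-1}/\partial x_i^{*}$. Once this is observed, every other inequality in the proof of Theorem \ref{theorem1} applies unchanged, so the argument could even be shortened to a remark that ``the proof proceeds identically to Theorem \ref{theorem1}, with the invertibility of $H(\xi^{*})$ supplied by Assumption \ref{assum3}.''
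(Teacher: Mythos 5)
Your proposal is correct and follows essentially the same route as the paper: reduce the implicit case to Theorem \ref{theorem1} by showing that Assumption \ref{assum3} supplies the invertibility of $H$, using the block lower bidiagonal structure to write $\det H$ as the product of the determinants of the diagonal blocks $I - \partial g_{i-1}/\partial x_i$. The paper merely spells out that determinant identity via the factorization $H = A(I + A^{-1}B)$ with $A^{-1}B$ nilpotent, whereas you invoke the block-triangular determinant property directly; the substance is identical.
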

\begin{proof}
   The only difference between the implicit method and the explicit one lies in the
Jacobian matrix $H$ defined in \eqref{eq:jacobian-implicit}.  
Hence, for the implicit ODE integrator, it suffices to examine the
invertibility of $H$.  

Since $H$ in \eqref{eq:jacobian-implicit} is a block lower bidiagonal matrix, its eigenvalues depend on the diagonal blocks only. Let us rewrite the matrix \eqref{eq:jacobian-implicit} as
\begin{equation}
    H=A+B=A(I+A^{-1}B),
\end{equation}
where $A$ and $B$ are block diagonal and block subdiagonal matrices as
\begin{equation}
    \begin{aligned}
        A_{ij} = \begin{cases}I - \frac{\partial g_{i-1}}{\partial x_i}, & i = j,\\
0, & \text{otherwise.}\end{cases}, \quad \quad B_{ij} = \begin{cases}-I - \frac{\partial g_j}{\partial x_j}, & j = i - 1,\\
0, & \text{otherwise.}\end{cases}.
    \end{aligned}
\end{equation}
The determinant of $H$ can be rewritten as
\begin{equation}\label{detproperty}
    \det(H)=\det(A)\det(I+A^{-1}B),
\end{equation}
where $A^{-1}B$ is a nilpotent matrix since it is a strictly lower–triangular block matrix. This implies that \cite{bernstein2009matrix}
\begin{equation}
    \det(I+A^{-1}B)=1.
\end{equation}
Subsequently, according to \eqref{detproperty}, the determinant of $H$ is
\begin{equation}
    \det(H)=\det(A),
\end{equation}
which means
\begin{equation}
        \det(H)=\det(A)=\prod_{n=1}^{N} \det\left(\frac{\partial g_{n-1}}{\partial x_n}-I\right).
\end{equation}
By invoking Assumption \ref{assum3}, we guarantee that this product is strictly non-zero. Consequently, $H$ is inherently invertible.
 Given this characterization, the remainder of the proof is identical to that of Theorem~\ref{theorem1}.
\end{proof}
The proven convergence results for the implicit method are analyzed numerically in Section \ref{sec:implicit-experiments}.
 
\section{Numerical results}\label{sec:experiments}
In this section, we implement our proposed methods from Section \ref{sec:method} using the JAX software package \cite{jax2018github}. We test and compare both the explicit and implicit solvers from Sections \ref{sec:pint-explicit-ode} and \ref{sec:pint-implicit-ode}, respectively,  against Parareal \cite{lions2001resolution} and the sequential approach. All experiments are run on an NVIDIA Tesla A100, 80-gigabyte graphics processing unit (GPU). 

To evaluate the performance of our parallel-in-time method, we consider the following IVPs. For the explicit case, we solve the logistic equation \cite{tronarp2019probabilistic}, the van der Pol oscillator \cite{buonomo1998periodic}, and the unactuated cart-pole \cite{underactuated}. For the implicit case, we consider the Dahlquist test problem \cite{corless2019optimal} and the Robertson chemical reaction \cite{robertson1966reaction}. The listed test problems represent standard ODEs or systems of ODEs for benchmarking. The logistic equation and the Dahlquist test problem are simple, scalar problems with known solutions. The van der Pol system introduces nonlinear oscillatory dynamics. The cart-pole is a coupled nonlinear system, common for testing control algorithms, and the Robertson reaction system is a classic stiff ODE benchmark suitable for testing implicit methods.

In all cases, we fix the integration interval $[t_0, t_f]$ and vary the discretization step, such that as $\delta t$ decreases, the number of intermediary ODE states $N = (t_f-t_0)/\delta t$ increases. For Parareal, we perform M single-step coarse updates, where $M=N^{1/2}$. The fine integrator is set to perform $N/M$ sequential steps with the same $\delta t$ as in the sequential and parallel-Newton cases. We compare our implicit method to a sequential implementation, where an Optimistix Newton solver solves the nonlinear implicit equations \cite{rader2024optimistix}. The same implicit solver, featuring the Optimistix Newton solver, is used for the implicit Parareal solver. For both explicit and implicit experiments, we average the running time over 10 instances for each discretization step. 

To ensure a fair comparison between our proposed method and the existing Parareal, we run both methods until they reach the same solution accuracy. Therefore, we consider the following iteration stopping criteria. For our parallel-in-time Newton's method-based solver, we evaluate the infinity norm of the residual $\|h\left(\xi^{(k)}\right)\|_\infty = \sup\left|h(\xi^{(k)})\right|$. For Parareal, we define $\left\|R^{(k)}\right\|_\infty = \max_{0\leq m\leq M-1}\left|x_{m+1}^{(k)} - F_m^{(k)}\right|$. Further details can be found in the linked code base\footnote{https://github.com/casiacob/parallel-ode}.

\subsection{Explicit ODE solution approximation}\label{sec:explicit-experiments}
We consider a set of non-stiff initial value problems. The logistic equation defines the first IVP governed by the following ODE
\begin{equation}
    \frac{dP}{dt} = r\,P\,\left(1-\frac{P}{K}\right), \quad t \in [0, 10], \quad P(0) = 0.1,\label{eq:logistic}
\end{equation}
where $r=1$ and $K=1$.

Next, let us consider the van der Pol oscillator
\begin{equation}
    \frac{d^2x}{dt^2} = \mu\,\left(1-x^2\right)\frac{dx}{dt} - x, \quad t\in[0, 10], \quad x(0) = 0, \quad \frac{dx(0)}{dt} = 1,\label{eq:vdp}
\end{equation}
where $\mu=1$.

Finally, we study a cart-pole system 
\begin{equation}
    \begin{split}
    \frac{d^2p}{dt^2} &= \frac{m_p\,\sin\theta\,\left(l \frac{d\theta}{dt} + g \cos \theta\right)}{m_c + m_p\, \sin^2 \theta},\\
    \frac{d^2 \theta}{dt^2} &= \frac{-m_p\,l\,\omega^2\,\cos\theta\,\sin\theta - (m_c+m_p)\,g\,\sin\theta}{l\,(m_c+m_p\,\sin^2\theta)},\\
     t&\in[0, 4], \quad p(0) = 0, \quad \frac{dp(0)}{dt}=0, \quad \theta(0) = \frac{\pi}{2}, \quad \frac{d\theta(0)}{dt} = 0.
    \end{split}\label{eq:cartpole}
\end{equation}
The parameters in \eqref{eq:cartpole} are set to $g=9.81$, $l=0.5$, $m_c=10$, and $m_p=1$. 

We solve the IVPs \eqref{eq:logistic}, \eqref{eq:vdp}, and \eqref{eq:cartpole}, with the following time step sizes $\delta t \in \{10^{-2}, 10^{-3}, 10^{-4}, 10^{-5}\}$. For all methods, that is, the parallel Newton, Parareal (coarse and fine solver), and the sequential method, we used the same fourth-order Runge--Kutta rule \cite{butcher2016numerical}. The initial guess $\xi^{(0)}$ for the parallel Newton method is set to a vector of ones for \eqref{eq:logistic} and \eqref{eq:vdp}. For \eqref{eq:cartpole}, we initialize $\xi^{(0)}$ with a vector of zeros. The average resulting runtimes are displayed in Figure \ref{fig:explicit-runtime}. Compared to both Parareal and the sequential method, our approach yields a faster runtime and features improved scalability as the discrete integration horizon increases.

\begin{figure}[t]
    \centering
    \begin{tikzpicture}

    \begin{groupplot}[
        group style={
            group name=my plots,
            group size=3 by 1, 
            horizontal sep=5pt
        }, 
        width=5.0cm, 
        height=5.0cm, 
        grid=both,
        minor x tick num=3,
        grid style={line width=.1pt, draw=gray!20},
        major grid style={line width=.1pt, draw=gray!60},
        ytick distance=10^1,
        ytick={0.0001, 0.001, 0.01, 0.1, 1.0, 10.0},
        yticklabels={,$10^{-3}$,,$10^{-1}$,,$10^{1}$},
        ymin=0.0003, ymax=20,
        ylabel shift=-0.25cm,
    ]
    \nextgroupplot[
        title={Logistic},
        xmode=log,
        ymode=log,
        xlabel={ODE States $N$},
        ylabel={Runtime [s]},
        xtick align=inside,
        ytick align=inside,
        legend entries={Parallel Newton;,Parareal;,Sequential},
        legend to name=common_legend,
        legend style={legend columns=-1},
    ]
        \addplot [
            semithick, 
            black, 
            mark=o, 
            mark size=2, 
            mark options={solid}
        ] table {%
            1000 0.00071223
            10000 0.00091093
            100000 0.00129282
            1000000 0.00331168

        };
        \addplot [
            semithick, 
            black, 
            mark=triangle, 
            mark size=2, 
            mark options={solid}
        ] table {%
            1000 0.00288682
            10000 0.00614488
            100000 0.01883779
            1000000 0.05735576
        };

        \addplot [
            semithick, 
            black, 
            mark=square, 
            mark size=2, 
            mark options={solid}
        ] table {%
            1000 0.00924201
            10000 0.08497362
            100000 0.83502538
            1000000 8.27522111
        };
    \nextgroupplot[
        title={van der Pol},
        xmode=log,
        ymode=log,
        xlabel={ODE States $N$},
        ylabel={},
        yticklabel={\empty},
        xtick align=inside,
        ytick align=inside,
        ymin=0.0003, ymax=20,
    ]
        \addplot [
            semithick, 
            black, 
            mark=o, 
            mark size=2, 
            mark options={solid}
        ] table {%
            1000 0.00232096
            10000 0.00326538
            100000 0.00630546
            1000000 0.0343277
        };
        \addplot [
            semithick, 
            black, 
            mark=triangle, 
            mark size=2, 
            mark options={solid}
        ] table {%
            1000 0.00766256
            10000 0.02673819
            100000 0.13406012
            1000000 0.53979595
        };
        \addplot [
            semithick, 
            black, 
            mark=square, 
            mark size=2, 
            mark options={solid}
        ] table {%
            1000 0.01248844
            10000 0.12495108
            100000 0.96730559
            1000000 9.93280284
        };

            \nextgroupplot[
        title={Cart-pole},
        xmode=log,
        ymode=log,
        xlabel={ODE States $N$},
        ylabel={},
        yticklabel={\empty},
        xtick align=inside,
        ytick align=inside,
        ymin=0.0003, ymax=20,
    ]
        \addplot [
            semithick, 
            black, 
            mark=o, 
            mark size=2, 
            mark options={solid}
        ] table {%
            400 0.00254242
            4001 0.00345271
            40000 0.00485833
            400000 0.03394353

        };
        \addplot [
            semithick, 
            black, 
            mark=triangle, 
            mark size=2, 
            mark options={solid}
        ] table {%
            400 0.00818293
            4001 0.01184125
            40000 0.02902684
            400000 0.06637533
        };
        \addplot [
            semithick, 
            black, 
            mark=square, 
            mark size=2, 
            mark options={solid}
        ] table {%
            400 0.01006086
            4001 0.08939776
            40000 0.89697938
            400000 8.79305544
        };
    
\end{groupplot}
\node at ($(current bounding box.south)+(0,-0.5cm)$) {\pgfplotslegendfromname{common_legend}};
\end{tikzpicture}
    \caption{Average runtime comparison of explicit parallel Newton-based method, Parareal, and sequential integration for the logistic equation, the van der Pol oscillator, and the cart-pole system over a fixed time interval with varying numbers of intermediary steps.}
    \label{fig:explicit-runtime}
\end{figure}
 \begin{figure}[t]
    \centering
    \begin{tikzpicture}

    \begin{groupplot}[
        group style={
            group name=my plots,
            group size=3 by 1, 
            horizontal sep=5pt
        }, 
        width=5.0cm, 
        height=5.0cm, 
        grid=both,
        minor x tick num=3,
        grid style={line width=.1pt, draw=gray!20},
        major grid style={line width=.1pt, draw=gray!60},
        ytick distance=10^1,
        ytick={1e-16, 1e-12, 1e-8, 1e-4, 1},
        ymin=1e-17, ymax=5,
        ylabel shift=-0.25cm,
    ]
    \nextgroupplot[
        title={Logistic},
        ymode=log,
        xlabel={Iteration $k$},
        ylabel={},
        xtick align=inside,
        ytick align=inside,
        legend entries={$\left\|h\left(\xi^{(k)}\right)\right\|_\infty$;, $\left\|R^{(k)}\right\|_\infty$},
        legend to name=explicit_residual_legend,
        legend style={legend columns=-1},
    ]
        \addplot [
            semithick, 
            black, 
            mark=o, 
            mark size=2, 
            mark options={solid}
        ] table {%
            0 0.899096393102499
            1 0.008035310323891902
            2 0.0012283800521106355
            3 0.00029730924926136973
            4 1.5692007800689224e-05
            5 2.601641020194903e-08
            6 4.852888924045118e-14
            7 1.0866077460557066e-16
        };

        \addplot [
            semithick, 
            black, 
            mark=triangle, 
            mark size=2, 
            mark options={solid}
        ] table {%
            0 1.6946804630713075e-06
            1 1.183808606697312e-10
            2 8.43769498715119e-15
            3 4.440892098500626e-16
        };
    \nextgroupplot[
        title={van der Pol},
        ymode=log,
        xlabel={Iteration $k$},
        ylabel={},
        yticklabel={\empty},
        xtick align=inside,
        ytick align=inside,
        ymin=1e-17, ymax=5,
    ]
        \addplot [
            semithick, 
            black, 
            mark=o, 
            mark size=2, 
            mark options={solid}
        ] table {%
            0 0.9899500012604583
            1 0.02985150864451224
            2 0.9735425210729862
            3 0.26403451834297903
            4 0.22036455491333706
            5 0.12304066067657315
            6 0.002187954149964668
            7 5.836482835919199e-07
            8 5.614397835529417e-13
            9 4.2327252813834093e-16
        };

        \addplot [
            semithick, 
            black, 
            mark=triangle, 
            mark size=2, 
            mark options={solid}
        ] table {%
            0 0.00761938963492903
            1 0.00021649117542958674
            2 3.873812919968911e-06
            3 1.1232184547738111e-07
            4 9.58577550669304e-10
            5 6.437128607927889e-12
            6 7.599476603559197e-14
            7 8.881784197001252e-16
        };
            \nextgroupplot[
        title={Cart-pole},
        ymode=log,
        xlabel={Iteration $k$},
        ylabel={},
        yticklabel={\empty},
        xtick align=inside,
        ytick align=inside,
        ymin=1e-17, ymax=5,
    ]
        \addplot [
            semithick, 
            black, 
            mark=o, 
            mark size=2, 
            mark options={solid}
        ] table {%
            0 1.5698153268199289
            1 0.14268895131812304
            2 0.1432059516274663
            3 0.3719954952957657
            4 0.1117837869228559
            5 0.040514165122703866
            6 0.003683857913785643
            7 1.1861656544487342e-06
            8 1.379452108096757e-14
        };
         \addplot [
            semithick, 
            black, 
            mark=triangle, 
            mark size=2, 
            mark options={solid}
        ] table {%
            0 0.05968881652402924
            1 0.03917136764330209
            2 0.0004320384719438408
            3  1.899288608253613e-05
            4 1.4840786155545516e-07
            5 3.3016878120406545e-10
            6 3.843148022042442e-12
            7 3.1086244689504383e-13
        };
\end{groupplot}
\node at ($(current bounding box.south)+(0,-0.5cm)$) {\ref{explicit_residual_legend}};
\end{tikzpicture}
    \caption{Evolution of the residual norms over Newton and Parareal iterations for the logistic equation, van der Pol oscillator, and cart-pole.}
    \label{fig:explicit-residuals}
\end{figure}

\begin{figure}[t]
    \centering
    \begin{tikzpicture}

    \begin{groupplot}[
        group style={
            group name=my plots,
            group size=3 by 1, 
            horizontal sep=5pt
        }, 
        width=5.0cm, 
        height=5.0cm, 
        grid=both,
        minor x tick num=3,
        grid style={line width=.1pt, draw=gray!20},
        major grid style={line width=.1pt, draw=gray!60},
        ytick={2, 4, 6, 8, 10, 12, 14, 16, 18, 20},
        yticklabels={2, , , 8, , , 14, , , 20},
        ymin=1, ymax=21,
        ylabel shift=-0.25cm,
    ]
    \nextgroupplot[
        title={Logistic},
        xmode=log,
        xlabel={$\delta t$},
        ylabel={\empty},
        xtick align=inside,
        ytick align=inside,
        legend entries={Parallel Newton $K$;,Parareal $K$},
        legend to name=explicit_iterations_legend,
        legend style={legend columns=-1},
    ]
        \addplot [
            semithick, 
            black, 
            mark=o, 
            mark size=2, 
            mark options={solid}
        ] table {%
            0.01 8
            0.001 8
            0.0001 7
            1e-05 7
        };
        \addplot [
            semithick, 
            black, 
            mark=triangle, 
            mark size=2, 
            mark options={solid}
        ] table {%
            0.01 4
            0.001 3
            0.0001 3
            1e-05 3
        };
    \nextgroupplot[
        title={van der Pol},
        xmode=log,
        xlabel={$\delta t$},
        yticklabel={\empty},
        xtick align=inside,
        ytick align=inside,
    ]
        \addplot [
            semithick, 
            black, 
            mark=o, 
            mark size=2, 
            mark options={solid}
        ] table {%
            0.01 10
            0.001 10
            0.0001 10
            1e-05 9
        };
        \addplot [
            semithick, 
            black, 
            mark=triangle, 
            mark size=2, 
            mark options={solid}
        ] table {%
            0.01 8
            0.001 9
            0.0001 16
            1e-05 18
        };

            \nextgroupplot[
        title={Cart-pole},
        xmode=log,
        xlabel={$\delta t$},
        yticklabel={\empty},
        xtick align=inside,
        ytick align=inside,
    ]
        \addplot [
            semithick, 
            black, 
            mark=o, 
            mark size=2, 
            mark options={solid}
        ] table {%
            0.01 9
            0.001 9
            0.0001 9
            1e-05 9
        };
        \addplot [
            semithick, 
            black, 
            mark=triangle, 
            mark size=2, 
            mark options={solid}
        ] table {%
            0.01 8
            0.001 4
            0.0001 3
            1e-05 2
        };

\end{groupplot}
\node at ($(current bounding box.south)+(0,-0.5cm)$) {\pgfplotslegendfromname{explicit_iterations_legend}};
\end{tikzpicture}
    \caption{Influence of the time-step $\delta t$ on the convergence iteration number $K$ for the logistic equation, van der Pol oscillator, and cart-pole.}
    \label{fig:explicit-iterations}
\end{figure}

To analyze the convergence speed of Newton's method, we solve again the three IVPs, namely \eqref{eq:logistic}, \eqref{eq:vdp}, and \eqref{eq:cartpole}, with $\delta t = 10^{-2}$, and display the infinity norm of the residual $\left\|h\left(\xi^{(k)}\right)\right\|_\infty$ at each iteration, in Figure \ref{fig:explicit-residuals}. We compare the result against the error of the Parareal iterations $\left\|R^{(k)}\right\|_\infty$. In all three experiments, we can clearly observe that Parareal requires fewer iterations to reach the same accuracy as the Newton's method-based solver. Especially in the case of the van der Pol system, as well as the cart-pole system, the error in Newton's method only starts to decrease after the first 5 to 6 iterations. This behaviour is expected based on the local convergence analysis of the method, provided in \ref{theorem1}. Nevertheless, the prefix scan implementation of the Newton's method-based solver exhibits faster runtimes, showing that the method reaches a predefined solution accuracy in less time.

Finally, we also investigate the influence of varying the time step period on the number of iterations required for convergence. We display the results in Figure \ref{fig:explicit-iterations}. Similar to what we have already seen in Figure \ref{fig:explicit-residuals}, in most cases, Parareal converges within fewer iterations for the same level of solution accuracy.

\subsection{Implicit ODE solution approximation}\label{sec:implicit-experiments} 
To analyze the implicit version of our proposed method, we repeat the scenarios presented in Section \ref{sec:explicit-experiments}. The Dahlquist test defines the first IVP governed by the following ODE
\begin{equation}
    \frac{dy}{dt} = \lambda \, y, \quad t\in[0, 4],\quad y(0) = 1, \label{eq:dahlquist}
\end{equation}
where we set the parameter $\lambda = -1$. We solve \eqref{eq:dahlquist} with the following discretization periods $\delta t \in \{10^{-1}, 10^{-2}, 10^{-3}, 10^{-4}\}$.

Next, let us define Robertson's ODE for chemical reactions 
\begin{equation}
    \begin{split}
        \frac{d y_1}{dt} &= -k_1 \, y_1 + k_3 \, y_2 \, y_3,\\
        \frac{d y_2}{dt} &= k_1 y_1 - k_2 y_2^2 - k_3 \, y_2 \, y_3,\\
        \frac{d y_3}{dt} &= k_2 \, y_2^2,\\
        t&\in[0, 500],\quad y_1(0) = 1, \quad y_2(0) = 0, \quad y_3(0)=0,
    \end{split}\label{eq:robertson}
\end{equation}
where the parameters are set to $k_1 = 0.04$, $k_2=3\times10^7$, and $k_3=10^4$. We solve \eqref{eq:robertson} with the following periods: $\delta t \in \{10^{-1}, 10^{-2}, 5\times10^{-3}\}$.

For both systems and methods, we used the backward Euler implicit rule \cite{butcher2016numerical}. Moreover, we set the initial guess for our parallel-in-time Newton-based method to a vector of zeros, for both \eqref{eq:dahlquist} and \eqref{eq:robertson}. The results are presented in Figure \ref{fig:implicit-runtime}. Similar to the previous section, we observe that our method yields a considerably faster time than the sequential approach as well as the Parareal implementation, with favorable scaling as the number of intermediary states increases.
\begin{figure}[t]
    \centering
    \begin{tikzpicture}

    \begin{groupplot}[
        group style={
            group name=my plots,
            group size=2 by 1, 
            horizontal sep=5pt
        }, 
        width=5.0cm, 
        height=5.0cm, 
        grid=both,
        minor x tick num=3,
        grid style={line width=.1pt, draw=gray!20},
        major grid style={line width=.1pt, draw=gray!60},
        ytick distance=10^1,
        ytick={0.0001, 0.001, 0.01, 0.1, 1.0, 10.0, 100},
        yticklabels={,$10^{-3}$,,$10^{-1}$,,$10^{1}$},
        ymin=0.0003, ymax=50,
        ylabel shift=-0.25cm,
    ]
    \nextgroupplot[
        title={Dahlquist},
        xmode=log,
        ymode=log,
        xlabel={ODE States $N$},
        ylabel={Runtime [s]},
        xtick align=inside,
        ytick align=inside,
        legend entries={Parallel Newton;, Parareal;, Sequential},
        legend to name=implicit_legend,
        legend style={legend columns=-1},
    ]

        \addplot [
            semithick, 
            black, 
            mark=o, 
            mark size=2, 
            mark options={solid}
        ] table {%
            40 0.00041704
            400 0.00044188
            4001 0.0005307
            40000 0.00081491
        };
         \addplot [
            semithick, 
            black, 
            mark=triangle, 
            mark size=2, 
            mark options={solid}
        ] table {%
            40 0.0115128
            400 0.04162772
            4001 0.09763474
            40000 0.22486305
        };
        \addplot [
            semithick, 
            black, 
            mark=square, 
            mark size=2, 
            mark options={solid}
        ] table {%
            40 0.00541027
            400 0.0398442
            4001 0.37844076
            40000 3.21991856
        };

    \nextgroupplot[
        title={Robertson},
        xmode=log,
        ymode=log,
        xlabel={ODE States $N$},
        ylabel={},
        yticklabel={\empty},
        xtick align=inside,
        ytick align=inside,
        ymin=0.0003, ymax=50,
    ]
        \addplot [
            semithick, 
            black, 
            mark=o, 
            mark size=2, 
            mark options={solid}
        ] table {%
        5000 0.00684168
        50000 0.02231853
        100000 0.03630676
        };
        \addplot [
            semithick, 
            black, 
            mark=triangle, 
            mark size=2, 
            mark options={solid}
        ] table {%
        5000 0.43008983
        50000 1.05268574
        100000 1.46933091
        };
        \addplot [
            semithick, 
            black, 
            mark=square, 
            mark size=2, 
            mark options={solid}
        ] table {%
        5000 1.05407581
        50000 7.27644947
        100000 13.69847407
        };
        
\end{groupplot}
\node[anchor=north] (title-x) at ($(my plots c1r1.south east)!0.5!(my plots c2r1.south west)-(0,1.0cm)$) {\ref{implicit_legend}};
\end{tikzpicture}
    \caption{Average runtime comparison of implicit parallel Newton-based method and the sequential method for the Dahlquist test problem and the Robertson chemical reaction system over a fixed time interval with varying numbers of intermediary steps.}
    \label{fig:implicit-runtime}
\end{figure}
\begin{figure}[t]
    \centering
    \begin{tikzpicture}

    \begin{groupplot}[
        group style={
            group name=my plots,
            group size=2 by 1, 
            horizontal sep=5pt
        }, 
        width=5.0cm, 
        height=5.0cm, 
        grid=both,
        minor x tick num=3,
        grid style={line width=.1pt, draw=gray!20},
        major grid style={line width=.1pt, draw=gray!60},
           ytick distance=10^1,
        ytick={1e-20, 1e-15, 1e-10, 1e-5, 1},
        ymin=1e-17, ymax=5,
        ylabel shift=-0.25cm,
    ]
    \nextgroupplot[
        title={Dahlquist},
        ymode=log,
        xlabel={Iteration $k$},
        ylabel={},
        xtick align=inside,
        ytick align=inside,
        legend entries={$\left\|h\left(\xi^{(k)}\right)\right\|_\infty$;, $\left\|R^{(k)}\right\|_\infty$},
        legend to name=imp_residual_legend,
        legend style={legend columns=-1},
    ]
        \addplot [
            semithick, 
            black, 
            mark=o, 
            mark size=2, 
            mark options={solid}
        ] table {%
            0 0.9090909090909091
            1 1.0092936587501423e-16
        };
        \addplot [
            semithick, 
            black, 
            mark=triangle, 
            mark size=2, 
            mark options={solid}
        ] table {%
            0 0.06052606994622256
            1 0.004579256428918821
            2 0.0005413367088250645
            3 5.2423973604484586e-05
            4 2.538413674584017e-06
            5 4.9164865137396596e-08
            6 1.1102230246251565e-16
        };
    \nextgroupplot[
        title={Robertson},
        ymode=log,
        xlabel={Iteration $k$},
        ylabel={},
        yticklabel={\empty},
        xtick align=inside,
        ytick align=inside,
        ytick distance=10^1,
        ylabel shift=-0.25cm,
        ytick={1e-20, 1e-15, 1e-10, 1e-5, 1},
    ]
        \addplot [
            semithick, 
            black, 
            mark=o, 
            mark size=2, 
            mark options={solid}
        ] table {%
0 0.9960159362549801
1 0.49999999884389346
2 0.24999999929574218
3 0.12499999939289713
4 0.06249999917647283
5 0.031249998508849103
6 0.015624996942027056
7 0.007812493261799104
8 0.0039062340895818053
9 0.0028824718016214335
10 0.003000436642158641
11 0.0030384225947932206
12 0.002974022204255907
13 0.002787207257987855
14 0.0024564428483596326
15 0.0021788123318744277
16 0.0016513051380945202
17 0.0008355377048871598
18 0.0002960337916729992
19 7.408455095058445e-05
20 7.968501949473762e-06
21 4.75144352394775e-08
22 2.7081222852029127e-11
23 1.1096994180677665e-16
        };

        \addplot [
            semithick, 
            black, 
            mark=triangle, 
            mark size=2, 
            mark options={solid}
        ] table {%
0 0.03146237230681484
1 0.0038577423309130443
2 0.00021637567305596295
3 8.828612217071097e-06
4 2.8780337713030235e-07
5 7.553947578564646e-09
6 1.61618274319153e-10
7 2.883748795312613e-12
8 4.374278717023117e-14
9 1.6653345369377348e-15
10 3.3306690738754696e-16
        };
\end{groupplot}
\node at ($(current bounding box.south)+(0,-0.5cm)$) {\ref{imp_residual_legend}};
\end{tikzpicture}
    \caption{Evolution of the residual norms over Newton and Parareal iterations for the Dahlquist test and Robertson's chemical reaction system.}
    \label{fig:implicit-residuals}
\end{figure}
\begin{figure}
    \centering
    \begin{tikzpicture}

    \begin{groupplot}[
        group style={
            group name=my plots,
            group size=3 by 1, 
            horizontal sep=5pt
        }, 
        width=5.0cm, 
        height=5.0cm, 
        grid=both,
        minor x tick num=3,
        grid style={line width=.1pt, draw=gray!20},
        major grid style={line width=.1pt, draw=gray!60},
        ylabel shift=-0.25cm,
    ]
    \nextgroupplot[
        title={Dahlquist},
        xmode=log,
        xlabel={$\delta t$},
        ylabel={\empty},
        xtick align=inside,
        ytick align=inside,
        ytick={2, 4, 6, 8, 10, 12},
        yticklabels={2, , 6, , 10, },
        ymin=1, ymax=13,
        legend entries={Parallel Newton $K$;,Parareal $K$},
        legend to name=implicit_iterations_legend,
        legend style={legend columns=-1},
    ]
        \addplot [
            semithick, 
            black, 
            mark=o, 
            mark size=2, 
            mark options={solid}
        ] table {%
           0.1 2
           0.01 2
           0.001 2
           0.0001 2
        };
        \addplot [
            semithick, 
            black, 
            mark=triangle, 
            mark size=2, 
            mark options={solid}
        ] table {%
           0.1 7
           0.01 9
           0.001 7
           0.0001 5
        };
    \nextgroupplot[
        title={Robertson},
        xmode=log,
        xlabel={$\delta t$},
        yticklabel={\empty},
        xtick align=inside,
        ytick align=inside,
        yticklabel pos=right,
        ytick pos=right,
        ytick={8, 10, 12, 14, 16, 18, 20, 22, 24},
        yticklabels={8, , , 14, , , 20, , 24},
        ymin=7, ymax=25,
    ]
        \addplot [
            semithick, 
            black, 
            mark=o, 
            mark size=2, 
            mark options={solid}
        ] table {%
           0.1 24
           0.01 24
           0.005 24
        };
        \addplot [
            semithick, 
            black, 
            mark=triangle, 
            mark size=2, 
            mark options={solid}
        ] table {%
            0.1 11
           0.01 10
           0.005 10
        };

\end{groupplot}
\node at ($(current bounding box.south)+(0,-0.5cm)$) {\pgfplotslegendfromname{implicit_iterations_legend}};
\end{tikzpicture}
    \caption{Influence of the time-step $\delta t$ on the convergence iteration number $K$ for the Dahlquist test, and the Robertson system.}
    \label{fig:implicit-iterations}
\end{figure}

We analyze the convergence speed of Newton's method using numerical results. In Figure \ref{fig:implicit-residuals} we plot the infinity norm of the residuals at each iteration obtained from integrating \eqref{eq:dahlquist} and \eqref{eq:robertson} with $\delta t=10^{-1}$. The initial guess for Newton's method remains the same, a vector of zeros, for both cases. Furthermore, we compare the error in Newton's method against the error in Parareal. Newton's method reaches a high solution accuracy after only two iterations compared to Parareal for the Dahlquist test. In contrast, Parareal converges considerably faster than Newton for the Robertson chemical reaction problem. This behaviour is again expected since Newton's method is highly dependent on the initial guess. A vector of zeros is close to the solution of the Dahlquist test problem; hence, the small number of Newton iterations. Nevertheless, in all cases Newton's method exhibits less computational time due to the prefix scan implementation compared to Parareal.

Similar to the explicit case, we finalize the numerical experiments by investigating how the variation of $\delta t$ influences the convergence speed. The resulting plots are displayed in Figure \ref{fig:implicit-iterations}. 

\section{Conclusion}
We presented a computationally fast method for the temporal parallelization of nonlinear ODE solvers. Our approach reformulates the numerical approximation of the ODE solution as a root-finding problem, which we solve iteratively using Newton's method. We introduce a parallel strategy for computing the Newton step, leveraging the power of massively parallel hardware. Our strategy's computational time scales logarithmically with the number of discretization steps. To demonstrate the efficiency of our method, we deploy it on a GPU and apply it to a series of benchmark problems. Furthermore, we compare our method against the sequential approach and the widely used Parareal method.

Future work could investigate the application of the presented method for partial differential equations, multiscale problems such as weather prediction systems, as well as simulation of stochastic differential equations.

\appendix
\section{Convergence proof of the parallel Newton's step for explicit ODE solvers}\label{ap:convergence_explicit_ODEs}
From the definition of the Newton step\\  $\Delta \xi^{(k)}=-H^{-1}\left(\xi^{(k)}\right)h\left(\xi^{(k)}\right)$ and the root-finding problem $h\left(\xi^*\right) = 0$ we have
\begin{equation}
\begin{split}
    \xi^{(k)} + \Delta \xi^{(k)} - \xi^* &= \xi^{(k)} - \xi^* - H^{-1}\left(\xi^{(k)}\right) h\left(\xi^{(k)}\right)\\&=H^{-1}\left(\xi^{(k)}\right)\left[H\left(\xi^{(k)}\right)\left(\xi^{(k)} - \xi^*\right) - \left(h\left(\xi^{(k)}\right) - h\left(\xi^*\right)\right)\right]. 
\end{split}\label{err}
\end{equation}

It follows from Taylor's theorem that
\begin{equation}
    \begin{aligned}
    h(\xi^{*})&=h\left(\xi^{(k)}+\left(\xi^{*}-\xi^{(k)}\right)\right)
    \\&=h\left(\xi^{(k)}\right)+H\left(\xi^{(k)}+\alpha\left(\xi^{*}-\xi^{(k)}\right)\right)\left(\xi^{*}-\xi^{(k)}\right).
    \end{aligned}
\end{equation}
where $\alpha \in [0,1]$. The term $ h\left(\xi^{*}\right)-h\left(\xi^{(k)}\right)$ is then given by
\begin{equation}\label{aux}
    h\left(\xi^{(k)}\right) - h\left(\xi^*\right)= H\left(\xi^{(k)}+\alpha\left(\xi^{*}-\xi^{(k)}\right)\right)\left(\xi^{*}-\xi^{(k)
    }\right).
\end{equation}
Substituting \eqref{aux} to the last term in \eqref{err} we obtain
\begin{equation}
\begin{split}
    &\left\|H\left(\xi^{(k)}\right)\left(\xi^{(k)} - \xi^*\right) - \left(h\left(\xi^{(k)}\right) - h\left(\xi^*\right)\right)\right\|\\ & \qquad= \left\|H\left(\xi^{(k)}\right)\left(\xi^{(k)}-\xi^{*}\right) -H\left(\xi^{(k)}+\alpha\left(\xi^{*}-\xi^{(k)}\right)\right)\left(\xi^{*}-\xi^{(k)}\right)\right\|\\
     & \qquad= \left\|\left[H\left(\xi^{(k)}\right) -H\left(\xi^{(k)}+\alpha\left(\xi^{*}-\xi^{(k)}\right)\right)\right]\left(\xi^{*}-\xi^{(k)}\right)\right\|.
    \end{split}
\end{equation}
From the submultiplicative property of matrix norms, we get
\begin{equation}
\begin{split}
    &\left\|H\left(\xi^{(k)}\right)\left(\xi^{(k)} - \xi^*\right) - \left(h\left(\xi^{(k)}\right) - h\left(\xi^*\right)\right)\right\|\\
    & \qquad \leq \left\|\left(\xi^{*}-\xi^{(k)}\right)\right\| \left\|\left[H\left(\xi^{(k)}\right) -H\left(\xi^{(k)}+\alpha\left(\xi^{*}-\xi^{(k)}\right)\right)\right]\right\|,
    \end{split}
\end{equation}
and following Assumption \ref{assum2} leads us to the upper bound
\begin{equation}\label{onetermbound}
\begin{split}
    \left\|H\left(\xi^{(k)}\right)\left(\xi^{(k)} - \xi^*\right) - \left(h\left(\xi^{(k)}\right) - h\left(\xi^*\right)\right)\right\| &\leq \left\|\left(\xi^{*}-\xi^{(k)}\right)\right\| \left\|\alpha L \left(\xi^{(k)}-\xi^{*}\right)\right\|\\
    & \leq L \left \|(\xi^{*}-\xi^{(k)})\right\|^2.
    \end{split}
\end{equation}
Hence, \eqref{err} is bounded by \eqref{onetermbound} as follows
\begin{equation}\label{convergence}
\begin{split}
    \left\|\xi^{(k)} + \Delta \xi^{(k)} - \xi^*\right\|  &= \left\|H^{-1}\left(\xi^{(k)}\right)\left[H\left(\xi^{(k)}\right)\left(\xi^{(k)} - \xi^*\right) - \left(h\left(\xi^{(k)}\right) - h\left(\xi^*\right)\right)\right]\right\|\\
             & \leq L \left\|H^{-1}\left(\xi^{(k)}\right)\right\|\left \|\left(\xi^{*}-\xi^{(k)}\right)\right\|^2.
\end{split}
\end{equation}
Next, we prove that $\left\| H^{-1}\left(\xi ^{(k)}\right)\right\|$ is bounded. Let us find an upper bound in terms of $H(\xi^{*})$. Recall that $H^{-1}(\xi^{*})$ exists according to \eqref{eq:jacobian} and it is a lower bidiagonal block matrix with identity blocks on the diagonal, hence, it is always invertible. Therefore, we can write
\begin{equation}
    \left\|H^{-1}\left(\xi^{(k)}\right)\right\|=\left\|\left[H\left(\xi^{(k)}\right)+H(\xi^{*})-H(\xi^{*})\right]^{-1}\right\|.
\end{equation}
The right-hand side term in the norm can be rewritten as 
\begin{equation}
\begin{split}
    &\left(H\left(\xi^{(k)}\right)+H(\xi^{*})-H(\xi^{*})\right)^{-1}\\
    &=H^{-1}(\xi^{*})\left[I+H^{-1}(\xi^{*})\left(H\left(\xi^{(k)}\right)-H(\xi^{*})\right)\right]^{-1}\\
    &=H^{-1}(\xi^{*})\left[I-\left(I-H^{-1}(\xi^{*})H\left(\xi^{(k)}\right)\right)\right]^{-1}.
\end{split}\label{eq:rhs-norm}
\end{equation} 
Consider a ball-neighborhood $\left\|\xi^{(k)}-\xi^{*}\right\| \leq r$ and fix $r=\frac{1}{2L\|H^{-1}(\xi^{*})\|}$. Then by Assumption \ref{assum2}, we have
\begin{equation}\label{ballneighbour}
    \|H(\xi)-H(\xi^{*})\| \leq L \|\xi-\xi^{*}\| \leq Lr.
\end{equation}
Replacing $r$ in \eqref{ballneighbour} yields the following result 
\begin{equation}\label{eq:ball-result}
\begin{aligned}
    \left\|I-H^{-1}\left(\xi^{*}\right)H\left(\xi^{(k)}\right)\right\|&=\left\|H^{-1}(\xi^{*})\left(H(\xi^{*})-H\left(\xi^{(k)}\right)\right)\right\| \\
    &\leq \left\|H^{-1}(\xi^{*})\right\|\left\|H(\xi^{*})-H\left(\xi^{(k)}\right)\right\|\\
    &\leq L \left\|H^{-1}(\xi^{*})\right\| \|\xi-\xi^{*}\|\\
    &\leq Lr \left\|H^{-1}(\xi^{*})\right\|=\frac{1}{2} \leq 1.
    \end{aligned}
\end{equation}
The result in \eqref{eq:ball-result} implies that we can write \eqref{eq:rhs-norm} as a Neumann series 
\begin{equation}
 \begin{aligned}
    \left(H\left(\xi^{(k)}\right)+H(\xi^{*})-H(\xi^{*})\right)^{-1}&= H^{-1}(\xi^{*})\sum_{n=0}^{\infty} \left(I-H^{-1}(\xi^{*})H\left(\xi^{(k)}\right)\right)^n.
    \end{aligned}
\end{equation}
Therefore, 
\begin{equation}
    \begin{split}
       &\left\| \left(H\left(\xi^{(k)}\right)+H(\xi^{*})-H(\xi^{*})\right)^{-1}\right\|\\
       &=\left\|H^{-1}(\xi^{*})\sum_{n=0}^{\infty} \left(I-H^{-1}(\xi^{*})H\left(\xi^{(k)}\right)\right)^n\right\|\\
       &\leq \left\|H^{-1}(\xi^{*})\right\|\left\|\sum_{n=0}^{\infty} \left(I-H^{-1}(\xi^{*})H\left(\xi^{(k)}\right)\right)^n \right\|\\
       &\leq \left\|H^{-1}(\xi^{*})\right\|\sum_{n=0}^{\infty} \left\|\left(I-H^{-1}(\xi^{*})H\left(\xi^{(k)}\right)\right)^n \right\|\\
       &= \left\|H^{-1}(\xi^{*})\right\|\sum_{n=0}^{\infty} \left\|\left(H^{-1}(\xi^{*})\left[H(\xi^{*})-H\left(\xi^{(k)}\right)\right]\right)^n \right\|\\
       &= \left\|H^{-1}(\xi^{*})\right\|\sum_{n=0}^{\infty} \left(\left\|H^{-1}(\xi^{*})\right\|\left\|H(\xi^{*})-H\left(\xi^{(k)}\right)\right\|\right)^n,
    \end{split}
\end{equation}
which, again, using the property of series yields
\begin{equation}\label{gseries}
    \begin{aligned}
         \left\| H^{-1}\left(\xi^{(k)}\right)\right\| \leq \frac{\|H^{-1}(\xi^{*})\|}{1-\left\|H^{-1}(\xi^{*})\right\|\left\|H(\xi^{*})-H\left(\xi^{(k)}\right)\right\|}.
    \end{aligned}
\end{equation}
Subsequently, using \eqref{gseries} and \eqref{ballneighbour} for the fixed $r=\frac{1}{2L\|H^{-1}(\xi^{*})\|}$, we get
\begin{equation}\label{boundofH}
     \left\| H^{-1}\left(\xi^{(k)}\right)\right\| \leq 2 \left\| H^{-1}(\xi^{*})\right\|.
\end{equation}
From \eqref{boundofH} and \eqref{convergence} follows that
\begin{equation}\label{convergence2}
\begin{split}
    \left\|\xi^{(k)} + \Delta \xi^{(k)} - \xi^*\right\|  \leq 2 L \left\|H^{-1}\left(\xi^{*}\right)\right\|\left \|\left(\xi^{*}-\xi^{(k)}\right)\right\|^2,
\end{split}
\end{equation}
which proves the quadratic convergence rate of the Newton iterates.

Next, we prove the quadratic convergence of the residual norms to zero. We start by expanding $\|h\left(\xi^{(k+1)}\right)\|$ as
\begin{equation}
\begin{split}
    &\left\|h\left(\xi^{(k+1)}\right)\right\|\\
    &=\left\|h\left(\xi^{(k+1)}\right)-h\left(\xi^{(k)}\right)-H\left(\xi^{(k)}\right)\left(\xi^{(k+1)}-\xi^{(k)}\right)\right\|\\
    &=\left\|H\left(\xi^{(k)}+\alpha \left(\xi^{(k+1)}-\xi^{(k)}\right)\right)\left(\xi^{(k+1)}-\xi^{(k)}\right)-H\left(\xi^{(k)}\right)\left(\xi^{(k+1)}-\xi^{(k)}\right)\right\|\\
    &=\left\|\left[H\left(\xi^{(k)}+\alpha \left(\xi^{(k+1)}-\xi^{(k)}\right)\right)-H\left(\xi^{(k)}\right)\right]\left(\xi^{(k+1)}-\xi^{(k)}\right)\right\|\\
    &\leq\left\|H\left(\xi^{(k)}+\alpha \left(\xi^{(k+1)}-\xi^{(k)}\right)\right)-H\left(\xi^{(k)}\right)\right\|\left\|\left(\xi^{(k+1)}-\xi^{(k)}\right)\right\|.
\end{split}    
\end{equation}
Invoking Assumption \ref{assum2} leads us to
\begin{equation}
    \begin{aligned}
        \left\|h\left(\xi^{(k+1)}\right)\right\|& \leq\left\|\left(H\left(\xi^{(k)}+\alpha \left(\xi^{(k+1)}-\xi^{(k)}\right)\right)-H\left(\xi^{(k)}\right)\right)\right\|\left\|\left(\xi^{(k+1)}-\xi^{(k)}\right)\right\| \\
        &\leq L \alpha\left\|\xi^{(k+1)}-\xi^{(k)}\right\|^2 \leq L \left\|\xi^{(k+1)}-\xi^{(k)}\right\|^2.\\
    \end{aligned}
\end{equation}
We also know that $\Delta \xi^{(k)}=\xi^{(k+1)}-\xi^{(k)}$. Therefore,
\begin{equation}
     \left\|h\left(\xi^{(k+1)}\right)\right\| \leq  L \left\|\Delta \xi^{(k)}\right\|^2.\\
\end{equation}
Using  $h\left(\xi^{k+1}\right)+H\left(\xi^{(k)}\right)\Delta \xi^{(k)}=0$, gives us the following:
\begin{equation}
     \left\|h\left(\xi^{(k+1)}\right)\right\| \leq  L\ \left\|H^{-1}\left(\xi^{(k)}\right)h\left(\xi^{(k)}\right)\right\|^2 
     \leq L \left\|H^{-1}\left(\xi^{(k)}\right)\right\|^2\left\|h\left(\xi^{(k)}\right)\right\|^2.
\end{equation}
Finally, using the inequlity \eqref{boundofH} gives
\begin{equation}
     \left\|h\left(\xi^{(k+1)}\right)\right\|   
     \leq 4L \left\|H^{-1}\left(\xi^{*}\right)\right\|^2\left\|h\left(\xi^{(k)}\right)\right\|^2.
\end{equation}
\bibliographystyle{siamplain}
\bibliography{parallel-ode-bibliography}
\end{document}